\pdfoutput=1  % this is to let arXiv know that it should use pdflatex to compile this file
\documentclass[11pt,a4paper]{amsart}
% packages
\usepackage[utf8]{inputenc}
\usepackage[T1]{fontenc}
\usepackage{colortbl}
\newcolumntype{C}{>{\columncolor[gray]{0.8}}c}
\usepackage{amsmath,amsthm,amssymb,amsrefs}
\usepackage{float}
\usepackage[top=1.2in, bottom=0.9in, left=1.3in, right=1.3in,marginpar=1in]{geometry}
\usepackage{etoolbox}
\usepackage{comment}
%\usepackage[colorinlistoftodos]{todonotes}
%\usepackage{import}
%\usepackage{tikz-cd}

% To show only referenced labels
%\usepackage{mathtools}
%\mathtoolsset{showonlyrefs}
%\usepackage{MnSymbol}
\usepackage{tikz}
\usetikzlibrary{decorations.markings,matrix,fadings,decorations.pathmorphing,calc,intersections,patterns}
\tikzset{
  between/.style args={#1 and #2}{
    at = ($(#1)!0.5!(#2)$)
  }
}
\tikzset{test/.style n args={3}{
    postaction={
      decorate,
      decoration={
        markings,
        mark=between positions 0 and \pgfdecoratedpathlength step 0.5pt with {
          \pgfmathsetmacro\myval{multiply(
            divide(
            \pgfkeysvalueof{/pgf/decoration/mark info/distance from start}, \pgfdecoratedpathlength
            ),
            100
            )};
          \pgfsetfillcolor{#3!\myval!#2};
          \pgfpathcircle{\pgfpointorigin}{#1};
          \pgfusepath{fill};}
      }}}}

% fonts:
% \A, \N => boldface

\docsvlist{N,Z,Q,C,R,F,A,W}
% \cA, \cB => calligraphic

\docsvlist{A,B,C,D,E,F,G,H,I,J,K,L,M,N,O,P,Q,R,S,T,U,V,W,X,Y,Z}
% \AA,\BB => boldface

\docsvlist{A,B,C,D,E,F,G,H,I,J,K,L,M,N,O,P,Q,R,S,T,U,V,W,X,Y,Z,a,b,c,d,e,f,h,i,j,k,m,n,o,p,q,r,s,t,u,v,w,x,y,z}
% \fa,fb => mathfrak

\docsvlist{a,b,r,s,t,u,v,w,x,y,z}
% \saA,... => mathscr

\docsvlist{a,b,c,d,e,g,h,i,j,k,l,m,n,o,p,q,r,s,t,u,v,w,x,y,z,A,B,C,D,E,F,G,H,I,J,K,L,M,N,O,P,Q,R,S,T,U,V,W,X,Y,Z}

% math operators

\docsvlist{Aff,ap,Ass,Aut,Bdry,Coar,cl,codim,coker,colim,cone,conv,diag,diam,dist,End,Ext,flat,lin,Lip,Hom,Id,im,Int,pillow,proj,rank,ray,rb,ri,pump,Simp,Span,spt,ssc,supp,Tan,Tot,vh,Vert,Vol,vol}

\DeclareMathOperator\cfk{CFK}

% other

\def\wt#1{\widetilde{#1}}

%\def\vc#1{\overrightarrow{#1}}

% theorems etc.
\newtheorem{theorem}[equation]{Theorem}
\newtheorem{proposition}[equation]{Proposition}

\newtheorem{corollary}[equation]{Corollary}
\newtheorem{lemma}[equation]{Lemma}

\theoremstyle{definition}
\newtheorem{definition}[equation]{Definition}

\theoremstyle{remark}
\newtheorem{remark}[equation]{Remark}

\newtheorem*{ack}{Acknowledgments}
\numberwithin{equation}{section}

%\title{The average of the $\Upsilon$ function and deformation of singularities}% Note on $\int \Upsilon(t)\mathop{dt}-2\tau$}
\title{Hyperbolic L-space knots not concordant to algebraic knots}
\author{Maciej Borodzik}
\address{Institute of Mathematics of Polish Academy of Science, ul \'Sniadeckich 8, 00-656 Warsaw, Poland}
\email{mcboro@mimuw.edu.pl}

\author{Masakazu Teragaito}
\address{International Institute for Sustainability with Knotted Chiral Meta Matter (WPI-SKCM$^2$), Hiroshima University, 
1-3-1 Kagamiyama, Higashi-Hiroshima, 739--8526, Japan}
\email{teragai@hiroshima-u.ac.jp}

\date{\today}

\begin{document}
%\thanks{The author gratefully acknowledges support by the Swiss National Science Foundation Grant 155477.}
\maketitle

\begin{abstract}
  We construct hyperbolic L-space knots that are not concordant to any linear combination of algebraic knots.
\end{abstract}
\section{Overview}
Describing knot concordance groups and understanding $\Z$-homology cobordism of three-manifolds are listed among the most important problems in low-dimensional topology. %Concordance of knots implies $\Z$-homology cobordism of corresponding branched covers and of corresponding surgeries, but usually not \emph{vice versa}. 
Among many questions, a particular interest is about the position of various classes of knots, respectively 3-manifolds, in the
concordance group, respectively in the group of $\Z$-homology cobordisms.

It is for example well-known, \cite{Myers}, that each 3-manifold is $\Z$-homology cobordant to a hyperbolic $3$-manifold. On the other hand,
there are 3-manifolds that are not $\Z$-homology cobordant to graph manifolds, \cite{CochranTanner}; recently \cite{DaiStoffregen} proved
that the graph manifolds span a subgroup of the group of $\Z$-homology cobordism with $\Z^\infty$-summand as a quotient.

For link cobordisms, there is an abundance of similar questions. There are knots that are not topologically concordant to alternating knots
\cite{FLZ}. A refinement of the argument shows that there exist knots that are not topologically concordant to L-space knots \cite{Ramazan}.
On the other hand, all knots are topologically concordant to strongly quasipositive knots \cite{BorodzikFeller}, a statement that is definitely false in the smooth category, because for all strongly quasipositive knots, all slice torus invariants are equal, compare \cite{FLL}.

An algebraic link is defined as a link of a plane curve singularity. All such links are graph links \cite{Eisenbud}. Also, all such
links are L-space links by \cite{Hedden,GorskyNemethi}.
Studying the position of algebraic knots in the whole knot concordance group seems to bring an immediate answer: while it is not stated explicitly in \cite{Ramazan}, the methods in that paper suggest that the quotient of the topological concordance group by the group of L-space knots has an infinite $\Z^\infty$ summand. However, the position of algebraic knots in the concordance subgroup spanned by all L-space knots seems rather misterious. This question seems to be a natural generalization of a question on the position of graph manifolds in the $\Z$-homology cobordism group
of $3$-manifolds.

%Links of plane curve singularities and links of normal surface singularities play a prominent role in low-dimensional topology. A link of a plane curve singularity is an L-space link \cite{GorskyNemethi}. Links of normal surface singularities are graph manifolds \cite{NemethiBook}, links of rational signularities are L-spaces \cite{NemethiInvMath}.

The main result of the paper is the following.
\begin{theorem}\label{thm:main}
  There exist  infinitely many hyperbolic L-space knots that are not smoothly concordant to any linear combination of algebraic knots.
\end{theorem}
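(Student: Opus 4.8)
The plan is to detect non-concordance with a \emph{smooth} concordance homomorphism whose value on an algebraic knot is rigidly controlled by the numerical semigroup of the singularity, yet which is flexible enough to be realized by a hyperbolic L-space knot lying outside the algebraic span. The natural choice is the Ozsv\'ath--Stipsicz--Szab\'o invariant $\Upsilon_K\colon[0,2]\to\R$, a piecewise-linear concordance homomorphism; thus concordance of $K$ to $\sum_i n_i A_i$ would force $\Upsilon_K=\sum_i n_i\Upsilon_{A_i}$. For an L-space knot, $\Upsilon_K$ is completely determined by the Alexander polynomial, equivalently by the associated formal semigroup $S_K\subseteq\Z_{\ge0}$, and is convex, with integer slopes and with singular points at rational values of $t$. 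Since $\Upsilon$ is a smooth invariant (and not a topological one), this is exactly the regime in which the sought distinction can live.

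First I would exploit the extra structure available on the algebraic side. For an algebraic knot $A$ the set $S_A$ is a genuine (symmetric) numerical semigroup, i.e.\ it is closed under addition, and this closure translates into concrete constraints on the positions of the singular points of $\Upsilon_A$ and on the integer slope-jumps there. The aim of this step is to extract from additive closure a \emph{linear} obstruction: a finite family of linear functionals $\ell$ on the space of piecewise-linear functions such that $\ell(\Upsilon_A)$ lies in a fixed proper subset for every algebraic $A$. Because each $\ell$ is linear, the same restriction is inherited automatically by every integral combination $\sum_i n_i\Upsilon_{A_i}$, irrespective of the signs of the $n_i$.

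Second I would produce the knots. Using a family of twisted torus knots, or an explicit surgery description, I would exhibit infinitely many knots that are L-space knots (verifying the positive L-space surgery condition) and hyperbolic (ruling out torus and satellite structure and appealing to geometrization together with a volume computation). For these I would compute $S_K$ explicitly and arrange that $S_K$ fails additive closure in precisely the way that makes some $\ell(\Upsilon_K)$ fall outside the admissible set. This gives $\Upsilon_K\notin\sspan_\Z\{\Upsilon_A\}$, whence $K$ is not smoothly concordant to any linear combination of algebraic knots; letting the parameters vary, and distinguishing the members by genus, yields infinitely many such $K$.

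The main obstacle is the second half of the first step: isolating a genuinely \emph{linear} consequence of additive closure, since convexity and positivity of an individual staircase are inequalities that need not survive combinations of mixed sign. If $\Upsilon$ alone proves too weak, the natural escalation is to replace it by the local equivalence class of $\cfk^\infty(K)$ --- for an L-space knot this is the staircase built from $S_K$ --- and to detect non-membership in the algebraic span by a homomorphism on the local equivalence group that is sensitive to the step data of the staircase. In either formulation, the heart of the matter is to show that the semigroup rigidity of algebraic knots cannot be reproduced, even after cancellation, by the formal semigroup that the constructed hyperbolic knots realize.
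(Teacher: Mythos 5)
Your overall strategy coincides with the paper's: use $\Upsilon$ as a smooth concordance homomorphism, find a linear functional on $\Upsilon$-functions whose values on algebraic knots are confined to a subgroup of $\R$ (so that the constraint survives integer combinations of mixed sign), and then build hyperbolic L-space knots violating it. However, you explicitly defer the one step on which everything hinges --- ``isolating a genuinely linear consequence of additive closure'' --- and this is a genuine gap, not a routine verification. The paper's answer is the functional $K\mapsto -3\int_0^2\Upsilon_K(t)\,dt$, which by a theorem of Tange is an \emph{integer} for every algebraic knot. Crucially, the proof of this integrality does not proceed from additive closure of the semigroup in the way you propose: it uses the resolution of the singularity, namely Tange's decomposition $\Upsilon_K=\sum_{i}\Upsilon_{m_i}$ over the multiplicity sequence $(m_1,\dots,m_n)$, together with the explicit evaluation $\int_0^2\Upsilon_m(t)\,dt=(1-m^2)/3$, so that $-3\int_0^2\Upsilon_K=\sum_i(m_i^2-1)\in\Z$. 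Trying to extract the same conclusion directly from the semigroup axioms, as you suggest, is likely to be substantially harder; the algebro-geometric input is what makes the functional computable and manifestly integral.

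On the construction side your outline is compatible with the paper (which uses a surgery description of a family $K_n$ of closures of positive $4$-braids, the Montesinos trick for the L-space surgery, and the failure of the semigroup property plus bridge-number/cabling arguments for hyperbolicity), but without the specific functional you cannot yet certify that your knots lie outside the algebraic span: you would need to compute $-3\int_0^2\Upsilon_{K}$ and see a non-integer, as the paper does with the value $3n+\tfrac{102}{5}$. Also note that your fallback to positivity or convexity constraints is correctly dismissed by your own remark --- inequalities do not survive mixed-sign combinations --- so the escalation to local equivalence classes of $\cfk^\infty$ would face the same difficulty of producing a homomorphism, not just an inequality, that separates the two classes. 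In short: right framework, but the decisive lemma (integrality of $-3\int\Upsilon$ on algebraic knots and its explicit non-integrality on the constructed family) is missing rather than merely unproved in detail.
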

\begin{proof}[Plan of the proof]
In Section~\ref{sec:kn} we construct an infinite family of knots $K_n$. By Montesinos trick we show that each $K_n$ admits a positive L-space surgery.
The braid index of $K_n$ is bounded by $4$ by construction. Combining this fact with the lack of semigroup property of $K_n$, we show that $K_n$
are hyperbolic.

The proof that $K_n$ is not concordant to any linear combination of algebraic knots involves computing the function $\Upsilon$
of 
Ozsv\'ath, Stipsicz and Szab\'o, \cite{Upsilon}. By \cite{Upsilon}, this function is determined from the Alexander polynomial of $K_n$, and to
pass from that polynomial to $\Upsilon$, we use an explicit algorithm described in \cite{BorodzikHedden}. The computation of the Alexander polynomial of $K_n$, conducted in Section~\ref{sec:kn} involves a surgery description of $K_n$, and Torres formula.

The main obstruction is the following. It was shown by \cite{Tange}
that if $K$ is an algebraic knot, then $-3\int_0^2\Upsilon_K(t)\,dt$ is an integer. In fact, echoing the calculation of \cite{Borodzik_abelian},
one can express this integral via invariants of the underlying plane curves singularity, compare Section~\ref{sec:algebraic}.

On the contrary, Theorem~\ref{thm:integral_n} shows that $-3\int_0^2\Upsilon_{K_n}$ is a non-integral fraction of $5$. Therefore,
none of the knots $K_n$ can be expressed as linear combinations of algebraic knots.
\end{proof}
By using the table of \cite{BakerKegel}, 
we have found several L-space knots for which
$-3$ times the integral of $\Upsilon$ is not an integer. These are gathered in a table in Section~\ref{sec:specific}.

The methods do not show whether the knots $K_n$ generate a $\Z^\infty$ summand in the quotient subgroup
generated by the concordance classes of all L-space knots modulo algebraic knots. However, we know that a subsequence of $K_n$
knots is linearly independent in the topological concordance group, see Theorem~\ref{thm:independence}.

The structure of the paper is the following. Section~\ref{sec:review} gives a necessary background on the $\Upsilon$ function.
In Section~\ref{sec:algebraic}, we recall Tange's calculations of the $\Upsilon$ function and show some properties of the integral.
In particular, we introduce a purely Floer-theoretic invariant $\omega$ and relate it to invariants of plane curve singularities. These results are of independent interest, especially that they give an algebro-geometric motivation for studying $-3\int\Upsilon$ as a knot invariant.

The family of knots $K_n$ is constructed in Section~\ref{sec:kn}. Their main properties are stated in Theorem~\ref{thm:integral_n},
whose part is proved in that section. The most difficult part, showing that $K_n$ are indeed L-space knots, is proved in
Section~\ref{sec:montesinos}.

Section~\ref{sec:linear} addresses the question of linear independence of knots $K_n$. We study roots of the Alexander polynomial of
$K_n$ and show that a subsequence of $K_n$ is linearly independent. We also show that $\Upsilon$ function alone cannot prove
that $K_n$ are independent modulo the group of algebraic knots.

Finally, in Section~\ref{sec:specific} we provide a table
of those L-space knots of \cite{BakerKegel} for which $-3\int\Upsilon$ is not integral.

\begin{ack}
  The authors are grateful to Peter Feller for fruitful conversations.
  MB was supported by NCN OPUS 2019/B/35/ST1/01120 grant.
  MT has been supported by JSPS KAKENHI Grant Number 20K03587. 
\end{ack}

\section{Review of the $\Upsilon$ function}\label{sec:review}
Recall that to a knot $K$ in the $3$-sphere, knot Floer homology associates a complex $\cfk^\infty(K)$ over the ring $\Z_2[U,U^{-1}]$, where
$U$ is a formal variable. The complex is $\Z\oplus\Z$--filtered, $\Z$-graded, and the multiplication by $U$ lowers the grading by $2$
and the filtration by $(1,1)$. The complex is defined up to bifiltered chain homotopy equivalence.

In \cite{Upsilon}, a concordance invariant $\Upsilon$ was extracted from this chain complex, see also \cite{Livingston}. In short, for each $t\in[0,2]$,
one associates a collapsed filtration. If $x\in\cfk^\infty$ is at bifiltration level $(a,b)$, we define its
$\cF_t$-filtration level by $\frac{t}{2}a+(1-\frac{t}{2})b$. Denote by $\cC_{s,t}$ to be the subcomplex of $\cfk$ of elements at $\cF_t$-filtration level $\le s$. As $\cC_{s,t}$ is a subcomplex of $\cfk$, there is a map $H_i(\cC_{s,t})\to H_i(\cfk)$, where the subscript $i$
denotes the homological grading (the $\Z$-grading) of the complex $\cfk$. Set:
\[\nu(t)=\min\{s\colon H_0(\cC_{s,t})\to H_0(\cfk)\textrm{ is surjective}\}.\]
The function $\Upsilon$ is defined as
\[\Upsilon(t)=-2\nu(t).\]
Among many properties of the function $\Upsilon$, the most important for the sake of this paper is that it is a concordance invariant. That is to say, if $K_1$ is smoothly concordant to $K_2$, then $\Upsilon_{K_1}(t)=\Upsilon_{K_2}(t)$ for all $t\in[0,2]$.

If $K$ is an L-space knot, the complex $\cfk$ is determined by the Alexander polynomial, via so-called \emph{staircase complex}. The $\Upsilon$ function for such knots was computed in \cite{Upsilon}. %It was observed in \cite{BorodzikHedden} that the $\Upsilon$ function
%of an L-space knot can be computed from directly from the gap set of the Alexander polynomial. 
To be more precise, write the Alexander polynomial $\Delta_K=1+(t-1)(t^{c_1}+\dots+t^{c_\ell})$ with $1\le c_1<\dots<c_{\ell}$ (such presentation is possible for all L-space knots, see \cite{OSlens}). Let $S_K=\Z_{\ge 0}\setminus\{c_1,\dots,c_\ell\}$.
\begin{definition}[see \cite{Wang}]\label{def:formal}
  The set $S_K$ is called the \emph{formal semigroup} of the L-space knot $K$.
\end{definition}
Another definition is that
\[
\frac{\Delta_K(t)}{1-t}=\sum_{s\in S_K} t^s.
\]
If $K$ is algebraic, the set $S_K$ is the semigroup of the underlying singularity. It is an interesting question to study
non-algebraic L-space knots for which $S_K$ has the structure of a semigroup \cite{Teragaito,Wang}. 

The $\Upsilon$ function of an L-space knots is related to the formal semigroup via the Fenchel--Legendre transform of an extension of the function $I\colon\Z\to\Z$, $I\colon n\mapsto \#\{S_K\cap[0,n)\}$; see \cite{BorodzikHedden}. In particular, for an L-space knot, $\Upsilon$ is convex.
It is interesting to see for which knots the $\Upsilon$ function is convex \cite{Himeno}.

\section{The integral of $\Upsilon$ in algebraic geometry}\label{sec:algebraic}

While it was proved by Tange \cite{Tange} that $-3\int\Upsilon$ for an algebraic knot is an integer, we recall his computations. We also show that
the quantity $-2\tau-3\int\Upsilon$ has a special interpretation in singularity theory. This explains our initial interest in the invariant $-3\int\Upsilon$.

We begin by recalling a standard definition, we refer to \cite{Brieskorn,Eisenbud}
for more details.

Let $z\in\C^2$ be a singular point of a complex algebraic curve $C$. Recall that the \emph{multiplicity} of a singular point, denoted
$m$, is the minimal positive number that can be obtained as a local intersection of $C$ at $z$ with another algebraic curve.
The fact that the point $z$ is singular means that $m>1$.

Suppose $z$ has a single branch. Set $m_1=m$. Blow up a singular point to obtain a new curve $\wt{C}$ and denote by $E$ the exceptional divisor
of the blow-up. Usually $\wt{C}$ will have a singular point at $\wt{C}\cap E$. Denote by $m_2$ its multiplicity. If $\wt{C}$ is
smooth, then $m_2=1$ and we stop the procedure. The procedure can be iterated until some strict transform $\wt{C}$ is smooth.
Eventually we obtain a finite sequence $(m_1,m_2,m_3,\ldots,m_n)$ of integers such that $m_i>1$ (usually we discard the last $1$ from the
sequence).

\begin{definition}
The sequence $(m_1,\ldots,m_n)$ is called the \emph{multiplicity sequence} of a unibranched singular point.
\end{definition}

The multiplicity sequence is a complete topological invariant of a singular point, in the sense that any two unibranched singular points with
the same multiplicity sequences are topologically equivalent. All topological invariants of the singularity can be computed from
the multiplicity sequence. For example, the following formula was proved by Milnor \cite{Milnor}, compare
\cite{Brieskorn,OZ}.
\begin{equation}\label{eq:milnor}
\mu_z=\sum_{i=1}^n m_i(m_i-1),
\end{equation}
where $\mu_z$ is the Milnor number of the critical point. The Milnor number is equal to twice the genus of the link of the singular point.

We introduce the following quantity.
\begin{equation}\label{eq:omega}
  \omega(K)=-3\int_0^2\Upsilon_K(t)dt-2\tau(K).
\end{equation}
As we see, $\omega$ is defined purely from Heegaard Floer-type invariants.

\begin{theorem}\label{thm:ouretasagree}
  Let $z$ be a singular point with one branch. Let $K$ be the link of singularity. Then $\omega(K)=\sum (m_i-1)$.
\end{theorem}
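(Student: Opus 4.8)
The plan is to push everything to the formal semigroup $S_K$ and then translate the resulting combinatorics into the multiplicity sequence. As the link of a unibranch plane curve singularity, $K$ is algebraic and hence an L-space knot, so $\tau(K)$ equals the Seifert genus $g$, and for such a knot $g=\ell=\mu_z/2$; combined with Milnor's formula \eqref{eq:milnor} this gives $2\tau(K)=\mu_z=\sum_i m_i(m_i-1)$. In view of the definition \eqref{eq:omega} of $\omega$, it therefore suffices to evaluate $-3\int_0^2\Upsilon_K(t)\,dt$ in terms of $S_K$ and to match the outcome with $\sum_i(m_i-1)$ after subtracting $2\tau(K)$. To analyse the integral I would use the description recalled in Section~\ref{sec:review}: via the Fenchel--Legendre transform of $I$, the function $\Upsilon_K$ is the convex, piecewise-linear function $\Upsilon_K(t)=\max_{s\in S_K}\{t(s-g)-2I(s)\}$ on $[0,2]$.

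Next I would reproduce Tange's evaluation of the integral. For each $t$ the maximum is realised at one of finitely many semigroup elements---precisely the left endpoints of the maximal runs of $S_K$---which correspond to the corners $\bigl(g-s+I(s),\,I(s)\bigr)$ in the $(a,b)$-filtration plane; exploiting the symmetry $\Upsilon_K(t)=\Upsilon_K(2-t)$, it is enough to integrate over $[0,1]$, where the active element runs monotonically through these endpoints. Integrating the resulting tent-and-plateau profile (conveniently by parts, using $\Upsilon_K'(t)=s(t)-g$ for the active maximiser $s(t)$) expresses $-3\int_0^2\Upsilon_K$ as an explicit sum over $S_K$. Carrying out the bookkeeping, this sum should collapse to the clean semigroup quantity $2g+r$, where $r$ is the number of maximal gap-blocks of $S_K$ (the maximal runs of consecutive non-elements); equivalently $\omega(K)=r$. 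The small torus cases ($T(2,2k+1)$, $T(3,4)$, $T(3,5)$, $T(4,7)$) already confirm this closed form, and the identity $-3\int_0^2\Upsilon_K=2g+r$ makes no reference to the multiplicity sequence, so it is genuinely a separate, semigroup-only step.

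The main obstacle is the remaining algebro-geometric identity $r=\sum_i(m_i-1)$, relating the number of gap-blocks of $S_K$ to the multiplicity sequence. I would prove it by induction on the number $n$ of blow-ups. The strict transform of the germ after one blow-up is again unibranch, with multiplicity sequence $(m_2,\dots,m_n)$ and some semigroup $S'$, and the classical transformation rule for the value semigroup under a single blow-up of multiplicity $m_1$ (the proximity/Enriques description, essentially ``subtract $m_1$ from the next generator'') must be shown to satisfy $r(S_K)-r(S')=m_1-1$. The base case is the smooth germ, where $S_K=\Z_{\ge0}$ has no gaps and both sides vanish, and the inductive step then yields $r=\sum_i(m_i-1)$. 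Equivalently, one checks that a single blow-up decreases $-3\int\Upsilon$ by $m_1^2-1$ while decreasing $2\tau$ by $m_1(m_1-1)$, in accordance with the behaviour $\delta=\delta'+\binom{m_1}{2}$ of the delta-invariant. Making the semigroup transformation under blow-up precise, and verifying this single-step increment, is the technical heart of the argument.
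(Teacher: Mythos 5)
Your reduction of $2\tau(K)$ to $\mu_z=\sum m_i(m_i-1)$ is fine and is exactly the paper's step. The proof breaks at the central claimed identity $-3\int_0^2\Upsilon_K(t)\,dt=2g+r$, where $r$ is the number of maximal gap-blocks of $S_K$: this identity is false, and so is the companion identity $r=\sum(m_i-1)$. Take the torus knot $T(5,7)$. Its semigroup $\langle 5,7\rangle$ has $g=12$ and gap-blocks $\{1,2,3,4\},\{6\},\{8,9\},\{11\},\{13\},\{16\},\{18\},\{23\}$, so $r=8$ and $2g+r=32$; but the multiplicity sequence is $(5,2,2)$, so Lemma~\ref{prop:peterdavid} together with \eqref{eq:integratem} (equivalently, the Feller--Krcatovich recursion $\Upsilon_{T(5,7)}=\Upsilon_{T(5,6)}+\Upsilon_{T(2,5)}$) gives $-3\int_0^2\Upsilon_{T(5,7)}=24+3+3=30$, while $\sum(m_i-1)=6\neq 8=r$. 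Your proposed inductive step $r(S_K)-r(S')=m_1-1$ already fails at the first blow-up of this cusp, which produces the $(2,5)$-cusp with semigroup $\langle 2,5\rangle$ and $r'=2$, so $r-r'=6\neq m_1-1=4$. The four examples you checked are small enough that the gap-blocks happen to match $\sum(m_i-1)$; that is an accident, not a theorem. There is also a structural warning you could have caught: you present the step $-3\int\Upsilon=2g+r$ as pure Legendre-transform bookkeeping on the staircase, with no input from the semigroup property, but then it would have to hold for every L-space knot, and the paper's knots $K_n$ (its entire point) have non-integral $-3\int\Upsilon$ while $2g+r$ is always an integer.

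The paper's actual proof bypasses the semigroup altogether: Tange's Lemma~\ref{prop:peterdavid} decomposes $\Upsilon_K=\sum_i\Upsilon_{m_i}$ into model functions indexed by the multiplicity sequence, and \eqref{eq:integratem} gives $\int_0^2\Upsilon_m=(1-m^2)/3$, whence $-3\int_0^2\Upsilon_K=\sum(m_i^2-1)$ and $\omega(K)=\sum(m_i^2-1)-\sum m_i(m_i-1)=\sum(m_i-1)$. If you want a purely semigroup-theoretic evaluation of the integral, you must redo that computation honestly; the answer cannot be expressed through $g$ and the number of gap-blocks alone.
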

\begin{proof}
  The proof relies on the following result of Tange \cite{Tange}, based on \cite{FellerKrcatovich} and \cite{BodnarNemethi}.

\begin{lemma}\label{prop:peterdavid}
For $m>1$ define the function
\[\Upsilon_m(t)=-i(i+1) -\frac{1}{2}m(m-1-2i)t\textrm{ if }t\in\left[\frac{2i}{m},\frac{2i+2}{m}\right].\]
If a singular point $z$ has link $K$ and multiplicity sequence $(m_1,\ldots,m_n)$, then
\[\Upsilon_K=\sum_{i=1}^n \Upsilon_{m_i}.\]
\end{lemma}
Continuing the proof of Theorem~\ref{thm:ouretasagree},
we use the following formula of \cite{Tange}.

\begin{equation}\label{eq:integratem}
  \int_0^2\Upsilon_m(t)dt=
  %\sum_{i=0}^{m-1}-\frac{2i(i+1)}{m}-\frac{2i+1}{m}(m-1-2i)\\
  %&=\sum_{i=0}^{m-1}-\frac{2i(i+1)+(2i+1)(m-1-2i)}{m}\\
  %&=\sum_{i=0}^{m-1}\frac{2i^2-(2i+1)(m-1)}{m}\\
  %&=\frac{2(m-1)m(2m-1)/6-(m(m-1)+m)(m-1)}{m}\\
  %&=2(m-1)(2m-1)/6-m(m-1)\\&=
  \frac{-m^2+1}{3}.
\end{equation}
From \eqref{eq:integratem} and Lemma~\ref{prop:peterdavid} we conclude that
\[-3\int_0^2\Upsilon_K(t)=\sum (m_i^2-1).\]
For algebraic knots, $2\tau(K)=\mu_z$ is the Milnor number of the underlying singular point. We conclude by \eqref{eq:milnor}.
\end{proof}
\begin{corollary}[see \cite{Tange}]
  For an algebraic knot $-3\int_0^2\Upsilon(t)dt$ is an integer.
\end{corollary}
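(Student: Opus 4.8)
The plan is to read off the integrality directly from the computation already carried out in the proof of Theorem~\ref{thm:ouretasagree}, since no new work is required. An algebraic knot $K$ is by definition the link of a plane curve singularity with a single branch, so it falls squarely within the hypotheses of Theorem~\ref{thm:ouretasagree} and its supporting Lemma~\ref{prop:peterdavid}; the corollary should therefore be a one-line consequence.

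First I would invoke the intermediate identity established while proving Theorem~\ref{thm:ouretasagree}, namely
\[
-3\int_0^2 \Upsilon_K(t)\,dt = \sum_{i=1}^n (m_i^2 - 1),
\]
where $(m_1,\dots,m_n)$ is the multiplicity sequence of the singularity. Since each multiplicity $m_i$ is a positive integer, each summand $m_i^2 - 1$ is a non-negative integer, and hence the total is an integer. This already closes the argument. Alternatively, and more in the spirit of the invariant $\omega$ introduced in \eqref{eq:omega}, I would split $-3\int_0^2 \Upsilon_K = \omega(K) + 2\tau(K)$ and check that each term is separately integral: Theorem~\ref{thm:ouretasagree} gives $\omega(K) = \sum_{i=1}^n (m_i - 1) \in \Z$, while $2\tau(K) = \mu_z = \sum_{i=1}^n m_i(m_i-1)$ by \eqref{eq:milnor} is manifestly an integer. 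The two routes are consistent because $m_i^2 - 1 = m_i(m_i-1) + (m_i-1)$.

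There is no genuine obstacle here: all of the content lives in Tange's expression of $\Upsilon_K$ as a sum of the model functions $\Upsilon_{m_i}$ (Lemma~\ref{prop:peterdavid}) together with the elementary integral \eqref{eq:integratem}, both of which are granted. The only point I would take care to flag is the unibranched hypothesis, since the relation $2\tau(K) = \mu_z$ and the presentation of $K$ via a single multiplicity sequence hold precisely for links of one branch, i.e.\ for knots; the analogous statement for multibranch algebraic links is not what is being asserted and would require the corresponding input about $\tau$ of a link rather than a knot.
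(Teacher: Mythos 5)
Your argument is correct and is exactly the paper's (implicit) reasoning: the corollary is stated without proof precisely because the identity $-3\int_0^2\Upsilon_K(t)\,dt=\sum_i(m_i^2-1)$, already displayed in the proof of Theorem~\ref{thm:ouretasagree}, makes the integrality immediate. Your remark about the unibranched hypothesis is a sensible clarification but introduces nothing beyond what the paper assumes.
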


We have the following interpretation of $\omega(K)$, which is due to Orevkov and Zajdenberg \cite{OZ}.
Blow up the critical point $z$ until the reduced inverse image $D=\pi^{-1}(C)_{red}$ is a normal crossing divisor. Let $E_1,\ldots,E_s$
be the exceptional divisors. Define the canonical divisor $K_z=\sum \alpha_iE_i$ by the condition that $K_z\cdot E_i+E_i\cdot E_i=-2$ for
all $i$. Let $C'_z$ be the strict transform of $C$.
\begin{proposition}[see \expandafter{\cite[Lemma 4]{OZ}}]
We have $\omega(K)=K_z\cdot(K_z+C'_z)$.
\end{proposition}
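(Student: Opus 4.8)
The plan is to reduce the statement to the combinatorial identity already isolated in Theorem~\ref{thm:ouretasagree}. Since that theorem gives $\omega(K)=\sum_{i=1}^n(m_i-1)$ for the multiplicity sequence $(m_1,\dots,m_n)$, it suffices to prove the purely algebro-geometric equality
\[
K_z\cdot(K_z+C'_z)=\sum_{i=1}^n(m_i-1).
\]
I would realize the resolution $\pi\colon X\to\C^2$ as a composition of point blow-ups with successive (possibly infinitely near) centers $p_1=z,p_2,\dots,p_s$, creating exceptional curves $E_1,\dots,E_s$, and attach to each $E_i$ three pieces of data: the discrepancy $\alpha_i$ (so that $K_z=\sum\alpha_iE_i$, which is well defined since the exceptional intersection form is negative definite), the multiplicity $v_i$ of the total transform along $E_i$ (so that $\pi^*C=C'_z+\sum v_iE_i$), and the self-intersection $E_i^2$. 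Here $C'_z$ meets the exceptional locus transversally, so all intersection numbers below are the local ones supported near $z$.

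First I would rewrite the left-hand side using adjunction. Because $\pi^*C$ is a pullback, $K_z\cdot\pi^*C=0$; substituting $C'_z=\pi^*C-\sum v_iE_i$ into both $K_z^2$ and $K_z\cdot C'_z$ and using the defining relation $K_z\cdot E_i=-2-E_i^2$ gives the compact expression
\[
K_z\cdot(K_z+C'_z)=\sum_{i=1}^s(v_i-\alpha_i)\,(2+E_i^2).
\]
Writing $\beta_i=v_i-\alpha_i$, the two standard blow-up recursions — namely $\alpha_{\mathrm{new}}=1+\sum_{E_j\ni p}\alpha_j$ and $v_{\mathrm{new}}=\mu_p+\sum_{E_j\ni p}v_j$, where $\mu_p$ is the multiplicity of the strict transform of $C$ at the center $p$ — combine into the single clean recursion $\beta_{\mathrm{new}}=(\mu_p-1)+\sum_{E_j\ni p}\beta_j$. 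Likewise every blow-up at a point of $E_i$ drops its self-intersection by one, so $E_i^2=-1-t_i$, where $t_i$ counts the later centers lying on $E_i$, and thus $2+E_i^2=1-t_i$.

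The heart of the argument is then a reindexing of the resulting double sum. Expanding $\sum_i\beta_i(1-t_i)=\sum_i\beta_i-\sum_i\beta_it_i$ and grouping the second term by the center $p_k$ through which each contribution passes, the inner sum $\sum_{E_j\ni p_k}\beta_j$ equals $\beta_k-(\mu_{p_k}-1)$ by the recursion, so the whole expression telescopes to $\sum_{k=1}^s(\mu_{p_k}-1)$. Finally I would observe that the centers split into those belonging to the multiplicity sequence, where $\mu_{p_k}=m_k>1$, and the extra centers introduced only to make $D$ a normal crossing divisor, where the strict transform is already smooth and $\mu_{p_k}=1$ contributes nothing; hence $\sum_{k=1}^s(\mu_{p_k}-1)=\sum_{i=1}^n(m_i-1)$, which is exactly $\omega(K)$.

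I expect the main obstacle to be purely the bookkeeping: keeping the recursions for $\alpha_i$, $v_i$ and $E_i^2$ consistent across blow-ups whose center lies on one versus two exceptional curves, and checking that the reindexing correctly handles the first center $p_1=z$, which lies on no exceptional curve and contributes only $\beta_1=m_1-1$. As a sanity check I would run the formula on the ordinary cusp $\{y^2=x^3\}$, where three blow-ups produce $(\alpha_i)=(1,2,4)$, $(v_i)=(2,3,6)$ and $(E_i^2)=(-3,-2,-1)$, giving $K_z\cdot(K_z+C'_z)=-3+4=1=m_1-1$, in agreement with Theorem~\ref{thm:ouretasagree}.
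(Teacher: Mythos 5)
Your argument is correct, but it is worth noting that the paper does not actually prove this proposition at all: it is stated as a quotation of Lemma~4 of Orevkov--Zaidenberg, and the only thing the paper itself establishes is the companion identity $\omega(K)=\sum(m_i-1)$ of Theorem~\ref{thm:ouretasagree}. What you supply is a self-contained replacement for that citation, and your reduction to $K_z\cdot(K_z+C'_z)=\sum(m_i-1)$ is exactly the right interface with the rest of the section. The computation itself checks out: the identity $K_z\cdot(K_z+C'_z)=\sum_i(v_i-\alpha_i)(2+E_i^2)$ follows from $K_z\cdot\pi^*C=0$ and the adjunction relations, the recursions for $\alpha$, $v$ and $E_i^2$ are the standard ones, and the telescoping $\sum_i\beta_i(1-t_i)=\sum_k(\mu_{p_k}-1)$ is a correct reindexing (including the base case $p_1=z$, where $\beta_1=\mu_{p_1}-1$ and the inner sum is empty). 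The one point you should make explicit is that the final step $\sum_k(\mu_{p_k}-1)=\sum_i(m_i-1)$ uses that every center of the resolution lies on the strict transform of $C$, so that $\mu_{p_k}\ge 1$ always; this holds for the minimal good resolution of a unibranch singularity, but a superfluous blow-up centered off the strict transform would contribute $\mu_{p_k}-1=-1$ and in fact changes $K_z\cdot(K_z+C'_z)$ by $-1$, so the quantity in the proposition is only well defined once the resolution is pinned down as the minimal one (which is what the paper's phrase ``blow up until $D$ is a normal crossing divisor'' implicitly means). Your sanity check on the cusp is consistent with both sides. Compared with simply invoking \cite{OZ}, your route buys a transparent, purely local proof and makes visible why the answer is the linear quantity $\sum(m_i-1)$ rather than the quadratic Milnor-type quantity $\sum m_i(m_i-1)$.
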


We conclude this section with the following simple estimate for quasi-homogeneous singular points
\begin{proposition}\label{prop:simpleestimate}
If $z$ is a quasihomogeneous singular point, topologically equivalent to $x^p-y^q=0$ with $p$, $q$ coprime, then
\[\omega(K)<p+q.\]
Moreover, if $p=2$, then $\omega(z)=\frac{q-1}{2}$.
\end{proposition}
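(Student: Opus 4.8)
The plan is to invoke Theorem~\ref{thm:ouretasagree}, which identifies $\omega(K)=\sum(m_i-1)$ with a sum over the multiplicity sequence $(m_1,\dots,m_n)$ of the singular point. For the quasihomogeneous germ $x^p-y^q$ with $p<q$ coprime, I would first record the well-known fact that the multiplicity sequence is produced by the subtractive Euclidean algorithm. Indeed, a single blow-up in the chart $x=uy,\ y=y$ turns $y^q-x^p$ into $y^p(y^{q-p}-u^p)$, so the strict transform is again a quasihomogeneous germ of type $(p,q-p)$, and the multiplicity recorded at this stage is $\min(p,q)=p$. Iterating, the pair $(p,q)$ is replaced by $(p,q-p)$, and one stops as soon as the smaller entry reaches $1$, since the curve is then smooth and the trailing multiplicity-$1$ terms are discarded.

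Translating the subtractive process into the division form of the Euclidean algorithm, I would set $r_{-1}=q$, $r_0=p$ and $r_{i-1}=a_{i+1}r_i+r_{i+1}$ with $0\le r_{i+1}<r_i$, terminating at $r_k=\gcd(p,q)=1$. Then the multiplicity sequence consists of each $r_i$ (for $0\le i\le k-1$) repeated $a_{i+1}$ times, every entry being $\ge 2$ because $r_{k-1}>r_k=1$. The main computation is the evaluation of $\sum_i m_i=\sum_{i=0}^{k-1}a_{i+1}r_i$. Using the relation $a_{i+1}r_i=r_{i-1}-r_{i+1}$, this sum telescopes to $r_{-1}+r_0-r_{k-1}-r_k=p+q-r_{k-1}-1$. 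Since $r_{k-1}\ge 2$ and the sequence is nonempty ($n\ge 1$), I conclude $\omega=\sum_i m_i-n\le (p+q-r_{k-1}-1)-1<p+q$, which proves the first assertion.

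The step I expect to be the crux is precisely this telescoping identity. A crude estimate from Milnor's formula \eqref{eq:milnor}, namely $(p-1)(q-1)=\sum_i m_i(m_i-1)\ge 2\sum_i (m_i-1)=2\omega$, only yields $\omega\le (p-1)(q-1)/2$, and this bound fails to be smaller than $p+q$ as soon as $(p-3)(q-3)\ge 8$, for instance for $(p,q)=(5,7)$. Thus one genuinely needs the exact value of $\sum_i m_i$ rather than a convexity estimate, and the telescoping is what delivers it.

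Finally, for the last assertion of the proposition I would run the algorithm directly with $p=2$: the pairs are $(2,q)\to(2,q-2)\to\cdots\to(2,1)$, so the multiplicity sequence is $2$ repeated $(q-1)/2$ times, giving $\omega(K)=\sum_i (m_i-1)=(q-1)/2$ immediately.
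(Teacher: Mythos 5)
Your proof is correct and takes essentially the same route as the paper: both reduce the statement to Theorem~\ref{thm:ouretasagree} and evaluate $\sum_i m_i$ via the (subtractive) Euclidean algorithm that generates the multiplicity sequence of $x^p-y^q$. In fact your telescoping yields the sharper exact value $\sum_i m_i=p+q-r_{k-1}-1$, whereas the paper asserts $\sum_{i=1}^n m_i=p+q$ --- an identity that fails as stated (for $(p,q)=(2,3)$ the multiplicity sequence is $(2)$, so $\sum_i m_i=2$) although the paper's final bound $\omega(K)\le p+q-1$ survives; your version is the more careful one.
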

\begin{proof}
The second part follows from the fact that the multiplicity sequence for the singular point $x^2-y^{2k+1}=0$ is a length $k$ sequence $(2,\ldots,2)$.
For the first part we observe that the multiplicity sequence is constructed as follows. Suppose $p<q$. Then $m_1=p$. The blow up replaces $(p,q)$
by $(p,q-p)$ or $(q-p,p)$ depending on whether $p<q-p$ or $q-p<p$. It follows that $\sum_{i=2}^nm_i=q$, hence
\[\sum_{i=1}^n m_i=p+q.\]
Therefore $\omega(K)\le p+q-1$.
\end{proof}
The implication of Proposition~\ref{prop:simpleestimate} is that $\omega$ is a \emph{linear} invariant, that is, its value
for a $T_{pq}$-torus knot grows like the sum of $p$ and $q$, not like the product. The latter behavior is more typical, the genus and
the signature are examples.

Another inequality involving $\omega(z)$ is a generalization of the Orevkov--Zajdenberg inequality \cite[Section 11]{OZ}.
\begin{proposition}\label{prop:orevkovineq}
For a cuspidal singular point $z$ with Milnor number $\mu$ and multiplicity $m$ we have $\mu\le m\omega$.
\end{proposition}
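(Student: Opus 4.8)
The plan is to reduce everything to the multiplicity sequence $(m_1,\dots,m_n)$ of $z$ and to exploit that this sequence is non-increasing. First I would record the two closed formulas already at our disposal. Milnor's formula \eqref{eq:milnor} gives
\[\mu=\sum_{i=1}^n m_i(m_i-1),\]
while Theorem~\ref{thm:ouretasagree} identifies $\omega=\sum_{i=1}^n(m_i-1)$. Since the multiplicity of the singular point is by definition $m=m_1$, the asserted inequality $\mu\le m\omega$ turns into a purely combinatorial statement about the finite sequence $(m_1,\dots,m_n)$ of integers, each of which is $\ge 2$.

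The second ingredient, and the only genuine input beyond the two formulas above, is the classical fact that the multiplicity sequence of a unibranched (cuspidal) plane curve singularity is non-increasing, i.e.\ $m_1\ge m_2\ge\dots\ge m_n$. I would justify this either by invoking the proximity inequalities of Enriques, or more concretely through the parametrization: after a coordinate change placing the tangent along the $x$-axis, write the branch so that $\operatorname{ord}_t x=m<\operatorname{ord}_t y$. A single blow-up in the adapted chart $x=x_1,\ y=x_1y_1$ replaces the orders $(m,\operatorname{ord}_t y)$ by $(m,\operatorname{ord}_t y-m)$, so the new multiplicity $\min(m,\operatorname{ord}_t y-m)$ never exceeds $m$. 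Because the branch is unibranched there is a unique infinitely near point at each stage, so no ambiguity in the choice of point arises, and iterating the estimate yields the monotonicity.

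With these two facts in hand the proof collapses to a one-line estimate. I would compute
\[m\omega-\mu=m_1\sum_{i=1}^n(m_i-1)-\sum_{i=1}^n m_i(m_i-1)=\sum_{i=1}^n (m_i-1)(m_1-m_i),\]
and then observe that every summand is nonnegative: $m_i-1\ge 1>0$ since each $m_i\ge 2$, and $m_1-m_i\ge 0$ by the monotonicity just established. Hence $m\omega-\mu\ge 0$, which is exactly the claim $\mu\le m\omega$.

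I do not expect any serious obstacle; the entire content is funneled into the monotonicity of the multiplicity sequence, which is standard, so the only care needed is a clean statement (or citation) of that fact. As a sanity check the same computation pins down equality: it holds precisely when $m_i=m_1$ for all $i$, i.e.\ when the multiplicity sequence is constant. The case $p=2$ of Proposition~\ref{prop:simpleestimate}, where every $m_i=2$, is a representative instance, and there one indeed finds $\mu=m\omega$.
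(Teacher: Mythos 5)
Your proof is correct and follows essentially the same route as the paper's: both reduce the claim via $\mu=\sum m_i(m_i-1)$ and $\omega=\sum(m_i-1)$ to the inequality $m_1\sum(m_i-1)\ge\sum m_i(m_i-1)$, which follows from the monotonicity $m_1\ge m_2\ge\dots\ge m_n$ of the multiplicity sequence. The only difference is cosmetic: you additionally sketch a justification of that monotonicity and identify the equality case, both of which the paper leaves implicit.
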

\begin{proof}
Let $m_1,\ldots,m_n$ be the multiplicity sequence of $z$. We have $m_1\ge m_2\ge\ldots\ge m_n$. This means that $m_1\sum(m_i-1)\ge \sum m_i(m_i-1)$,
but in light of \eqref{eq:milnor}, this is precisely the statement of the proposition.
\end{proof}

\section{The family $K_n$}\label{sec:kn}

For $n\ge 1$, our knot $K_n$ is given by the surgery description shown in Figure \ref{fig:knot_0}.
Let $L=K\cup C_1\cup C_2$ be the oriented link as shown in Figure \ref{fig:knot_0}.
If we perform $(-1)$-surgery on $C_1$ and $(-\frac{1}{n+1})$-surgery on $C_2$, then
$K$ is changed into $K_n$.
Thus $K_n$ is the closure of the $4$-braid
\[
[2,1,3,2, (3,2,1)^4, 3^{2(n+1)},2],
\]
where an integer $k$ denotes the standard braid generator $\sigma_k$ of the $4$-string braid group.
In particular, $K_1$ is $m211$, and $K_2$ is  $t09284$  in the SnapPy census \cite{CDGsnappy}.
Since $K_n$ is the closure of a positive braid, $K_n$ is fibered and its genus is equal to $n+8$.
The diagram of $K_1$ is given in  Figure~\ref{fig:knot_0}. 
%\footnote{We are grateful to Andrew Bartholomew for his excellent tool for drawing knot diagrams available at
%\url{https://github.com/AndrewBartholomew62/Andrew-Bartholomew-draw}.}

\begin{figure}[h]
\begin{minipage}{5cm}
\includegraphics*[scale=0.7]{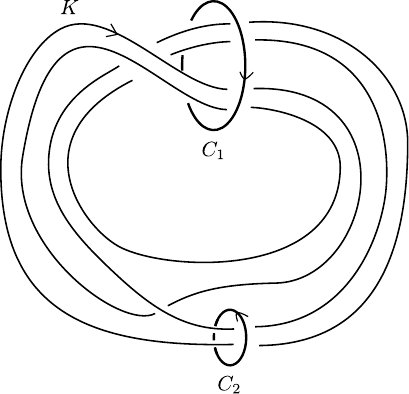}
\end{minipage}\hskip2cm
  \begin{minipage}{5cm}
 \includegraphics[width=4.5cm]{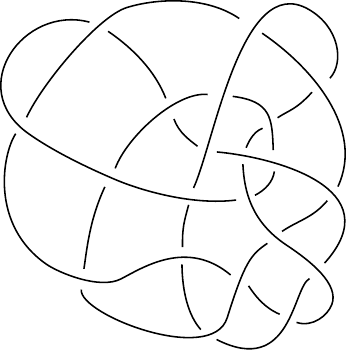}
\end{minipage}\hskip 0.5cm
\caption{\emph{Left:} The surgery description of $K_n$.
For the link $L=K\cup C_1\cup C_2$, perform $(-1)$-surgery on $C_1$ and $(-\frac{1}{n+1})$-surgery on $C_2$.
Then $K$ is changed to $K_n$. \emph{Right:} The knot $K_1=m211$.}\label{fig:knot_0}
\end{figure}
\begin{comment}
\begin{figure}[H]
  \begin{minipage}{5cm}
  \includegraphics[width=6cm]{}
  \caption{The knot $m211$.}\label{fig:knot_0}
\end{figure}

\begin{figure}[H]
\includegraphics*[scale=0.7]{knot-surgery.pdf}
\caption{The surgery description of $K_n$.
For the link $L=K\cup C_1\cup C_2$, perform $(-1)$-surgery on $C_1$ and $(-\frac{1}{n+1})$-surgery on $C_2$.
Then $K$ is changed to $K_n$.}
\label{fig:knot}
\end{figure}
\end{comment}

\begin{theorem}\label{thm:integral_n}
For $n\ge 1$, the knot $K_n$ enjoys the following.
\begin{enumerate}
\item
 $K_n$ is hyperbolic.
\item
$(4n+24)$-surgery on $K_n$ gives an L-space, so $K_n$ is an L-space knot.
\item
For the Upsilon invariant $\Upsilon_{K_n}(t)$ of $K_n$,
\[
I=\int_0^2 \Upsilon_{K_n}(t)\, dt= -\left(n+\frac{34}{5} \right).
\]
Thus $-3I=3n+\frac{102}{5}\not\in \mathbb{Z}$.
\end{enumerate}
\end{theorem}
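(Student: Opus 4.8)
The three assertions are intertwined—hyperbolicity in (1) uses both the L-space property (2) and the Alexander polynomial underlying (3)—so the plan is to establish them in the order: Alexander polynomial and formal semigroup, then the L-space surgery, then hyperbolicity, and finally the integral. First I would compute the one-variable Alexander polynomial $\Delta_{K_n}(t)$ directly from the surgery description of Figure~\ref{fig:knot_0}. The input is the multivariable Alexander polynomial $\Delta_L(t,t_1,t_2)$ of the three-component link $L=K\cup C_1\cup C_2$. Because the $(-1)$- and $(-\tfrac{1}{n+1})$-surgeries are performed on the \emph{unknotted} components $C_1,C_2$ and return $S^3$, their net effect is to twist $K$ along the spanning disks of $C_1,C_2$, and the resulting change in the Alexander polynomial is governed by the Torres formula together with the linking numbers $\mathrm{lk}(K,C_i)$; this yields a closed expression for $\Delta_{K_n}(t)$ as a function of $n$. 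Writing it in the normalized L-space form $\Delta_{K_n}=1+(t-1)(t^{c_1}+\dots+t^{c_\ell})$ of \cite{OSlens}, I read off the formal semigroup $S_{K_n}$ and exhibit an explicit pair $a,b\in S_{K_n}$ with $a+b\notin S_{K_n}$. Thus $S_{K_n}$ is \emph{not} a numerical semigroup, which is the key combinatorial input for (1) and for the non-integrality in (3).

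The heart of the theorem is (2), and I expect it to be the main obstacle. Since $K_n$ is strongly invertible, the Montesinos trick applies: the $(4n+24)$-surgery is realized as the double branched cover $\Sigma_2(S^3,\mathcal L_n)$ of $S^3$ along an explicit link $\mathcal L_n$ read off from the tangle decomposition carried by the surgery diagram, with the surgery coefficient recorded by the rational tangle filling. It then suffices to show that $\mathcal L_n$ is quasi-alternating—or, failing that, that the Seifert fibered space $\Sigma_2(S^3,\mathcal L_n)$ meets the standard Ozsv\'ath--Szab\'o L-space criterion—so that $\Sigma_2(S^3,\mathcal L_n)$ is an L-space. As $4n+24>0$, a positive L-space surgery forces $K_n$ to be an L-space knot, and then all surgeries of slope $\ge 2g(K_n)-1=2n+15$, in particular $4n+24$, are L-spaces \cite{OSlens}. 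Correctly identifying $\mathcal L_n$ and verifying its quasi-alternation is the delicate step, which is why (2) is deferred to Section~\ref{sec:montesinos}.

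For (1), I would invoke the geometrization trichotomy: $K_n$ is a torus knot, a satellite, or hyperbolic. A torus knot $T_{p,q}$ has formal semigroup the genuine semigroup $\langle p,q\rangle$, so the non-semigroup property already excludes torus knots. For satellites I combine the braid index bound with the same obstruction. A satellite L-space knot is a cable $C_{p,q}(J)$ of a nontrivial L-space knot $J$ with wrapping number $p\ge 2$; since the braid index of a satellite is at least the wrapping number times the braid index of the companion, $b(K_n)\ge p\,b(J)$. As $K_n$ is the closure of a $4$-braid, $b(K_n)\le 4$, so $p\,b(J)=4$ with $p,b(J)\ge 2$, forcing $p=b(J)=2$; hence $J=T_{2,2k+1}$ and $K_n=C_{2,q}(T_{2,2k+1})$ is an iterated torus knot. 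Such knots have genuine formal semigroups, contradicting the previous step. Therefore $K_n$ is hyperbolic.

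Finally, for (3), since $K_n$ is an L-space knot its $\Upsilon$ function is determined by $\Delta_{K_n}$, equivalently by $S_{K_n}$. Applying the algorithm of \cite{BorodzikHedden}—the Fenchel--Legendre transform of the extended gap-counting function $I\colon m\mapsto \#\{S_{K_n}\cap[0,m)\}$—produces $\Upsilon_{K_n}$ as an explicit continuous, piecewise-linear, convex function with rational breakpoints depending on $n$. Integrating over $[0,2]$ then gives $I=-(n+\tfrac{34}{5})$. The denominator $5$, and hence the non-integrality of $-3I=3n+\tfrac{102}{5}$, can be pinned to a breakpoint of $\Upsilon_{K_n}$ at a value of $t$ with denominator $5$—a feature that cannot arise for a sum of the functions $\Upsilon_m$ of Lemma~\ref{prop:peterdavid}, and that reflects exactly the failure of $S_{K_n}$ to be a genuine semigroup.
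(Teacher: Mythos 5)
Your outline matches the paper's: Alexander polynomial via the multivariable polynomial of $L$ and the Torres formula, failure of the formal semigroup, Montesinos trick for the L-space surgery, the geometrization trichotomy for hyperbolicity, and the Fenchel--Legendre algorithm of \cite{BorodzikHedden} for the integral. However, there is a genuine gap in your treatment of the satellite case of (1). You assert that ``a satellite L-space knot is a cable $C_{p,q}(J)$ of a nontrivial L-space knot $J$.'' This is an open conjecture, not a theorem; what is actually known (\cite{BM2019,Ho2016}) is that the companion and the pattern knot of a satellite L-space knot are L-space knots and that the pattern is braided. The paper gets the cable structure by a different route: since $K_n$ is a closed $4$-braid its \emph{bridge} number is at most $4$, and Schubert's inequality $b(K)\ge w\cdot b(J)$ (wrapping number times bridge number of the companion) forces $w=2$ and the companion to be $2$-bridge; a braided pattern of wrapping number two is a $2$-cable, and a $2$-bridge L-space knot is a torus knot by \cite{OSlens}, whence $K_n$ would be an iterated torus knot, contradicting Wang's semigroup theorem. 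Your braid-index substitute ($b(K_n)\ge p\,b(J)$ for a general satellite) is also not available off the shelf; Williams' braid-index formula applies to cables, which is exactly what you have not yet established. You should also note that one must rule out connected sums separately (the paper cites Krcatovich's primeness of L-space knots, or primeness of positive braid closures).

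Two smaller points. For (2), your fallback of checking ``the standard L-space criterion for the Seifert fibered space $\Sigma_2(S^3,\mathcal{L}_n)$'' cannot work as stated: that double branched cover \emph{is} $S^3_{4n+24}(K_n)$, which is a large surgery on a hyperbolic knot and hence not Seifert fibered. The paper instead runs an induction on $n$ through the unoriented skein triangle, verifying $\det\ell_n=\det\ell_{n-1}+\det\ell_\infty$ so that the L-space property propagates, with only the terminal links $\ell_\infty$ and $\ell_1$ being Montesinos links with elliptic (hence L-space) double covers; this is the quasi-alternating-style argument you allude to, but it must be carried out, not merely hoped for. Finally, your closing remark for (3) that a breakpoint of $\Upsilon$ at a value of $t$ with denominator $5$ ``cannot arise for a sum of the functions $\Upsilon_m$'' is false: $\Upsilon_5$ itself has breakpoints at $t=2i/5$. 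The obstruction is the non-integrality of $-3\int_0^2\Upsilon$, not the location of the singularities.
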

The proof of the second property is deferred to Section~\ref{sec:montesinos}. 
Our first step towards proving Theorem~\ref{thm:integral_n} is to compute the Alexander polynomial. Once we know it, using
the fact that $K_n$ are L-space knots, we compute the integral proving item~(3). Moreover, knowing the Alexander polynomial, will allow us
to prove that $K_n$ is not hyperbolic.
\begin{lemma}\label{lem:alex}
The Alexander polynomial of $K_n$ is given as
\[
\begin{split}
\Delta_{K_n}(t)&=( t^{2n+16}-t^{2n+15}) + (t^{2n+12}-t^{2n+11} )+ (t^{2n+10}-t^{2n+9} )\\
 &\quad +(t^{2n+7}-t^{2n+6})+(t^{2n+5}-t^{2n+4})+\dots +(t^{11}-t^{10})+t^9  \\
& \quad  -t^7+t^6-t^5+t^4-t+1.
\end{split}
\]

\end{lemma}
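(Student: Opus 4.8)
The plan is to exploit the surgery description $L=K\cup C_1\cup C_2$ directly, following the roadmap indicated in the overview. First I would compute, once and for all, the multivariable Alexander polynomial $\Delta_L(t,x,y)$ of the fixed three-component link $L$, where $t,x,y$ are the variables dual to the meridians $\mu_K,\mu_{C_1},\mu_{C_2}$; this is a single, $n$-independent computation (Wirtinger presentation from the diagram followed by Fox calculus, or recognition of $L$ from the braid/surgery data). Along the way I would record the linking numbers $\ell_i=\mathrm{lk}(K,C_i)$ and $\mathrm{lk}(C_1,C_2)$ read off the diagram.

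The $n$-dependence enters \emph{only} through the two Dehn fillings, and this is the conceptual heart of the argument. The $(-1)$-filling on $C_1$ imposes the group relation $\mu_{C_1}=\lambda_{C_1}$, while the $(-1/(n+1))$-filling on $C_2$ imposes $\mu_{C_2}=\lambda_{C_2}^{\,n+1}$. Adjoining these to a presentation of $\pi_1(S^3\setminus L)$ yields a presentation of $\pi_1(S^3\setminus K_n)$ whose abelianization sends $\mu_K\mapsto t$ and forces $\mu_{C_1}\mapsto t^{a_1}$, $\mu_{C_2}\mapsto t^{a_2(n)}$ for integers determined by solving the resulting linear system in the $\ell$'s. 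Crucially the exponent $a_2(n)$ is linear in $n$: the $(n+1)$-st power in the filling slope of $C_2$ produces $a_2(n)=(n+1)\ell_2$, up to a correction from $\mathrm{lk}(C_1,C_2)$, and consistency with the $\sigma_3^{2(n+1)}$ block of the braid and with the top degree $2g=2n+16$ forces $\ell_2=2$, hence the exponent $t^{2(n+1)}$.

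With these substitutions, the surgery version of the Torres formula (equivalently, computing the order of the Alexander module of the surgered presentation by Fox calculus) specializes $\Delta_L$ to a single variable and yields $\Delta_{K_n}$ up to a unit $\pm t^{j}$. Because the variable $y$ is replaced by $t^{2(n+1)}$ and enters through only one block of the presentation, the answer is \emph{affine} in $t^{2(n+1)}$, i.e. of the shape $\Delta_{K_n}(t)=P(t)+t^{2(n+1)}Q(t)$ for fixed Laurent polynomials $P,Q$. This is exactly why the claimed formula splits into the $n$-independent low-degree tail $t^9-t^7+t^6-t^5+t^4-t+1$ and a high-degree part shifted by $t^{2n}$. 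I would then fix the unit by the normalizations $\Delta_{K_n}(1)=1$ and the symmetry $\Delta_{K_n}(t)=t^{2n+16}\Delta_{K_n}(t^{-1})$ and read off the coefficients; these two normalizations, together with the all-$\pm1$ L-space shape, are easy consistency checks on the final expression.

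The main obstacle I anticipate is not the $n$-dependence, which is structurally transparent, but pinning down the fixed data cleanly: computing $\Delta_L$ and, above all, getting the precise correction factors and units in the surgery specialization correct, since that is where sign and $(t-1)$-type factors are easy to misplace. To guard against this I would run an independent check via the explicit $4$-braid: with the reduced Burau representation $\bar\rho$ one has $\Delta_{K_n}(t)\doteq \det\!\big(\bar\rho(\beta_n)-I_3\big)/(1+t+t^2+t^3)$, and writing $\bar\rho(\beta_n)=A\,\bar\rho(\sigma_3)^{2(n+1)}B$ with $A,B$ fixed exhibits the same affine-in-$t^{2(n+1)}$ dependence directly, since $\bar\rho(\sigma_3)^{2(n+1)}$ has entries $t^{2(n+1)}$ and $t\,(1-t^{2(n+1)})/(1+t)$. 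Finally I would verify the formula for $n=1$ against the known Alexander polynomial of \texttt{m211} (and $n=2$ against \texttt{t09284}) from the census.
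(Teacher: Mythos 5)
Your proposal follows essentially the same route as the paper: compute the multivariable Alexander polynomial of the fixed link $L=K\cup C_1\cup C_2$ once, use the homological effect of the two fillings (with $\mathrm{lk}(K,C_1)=4$, $\mathrm{lk}(K,C_2)=2$) to substitute $y\mapsto t^4$, $z\mapsto t^{2(n+1)}$ up to the meridian sign, and then extract $\Delta_{K_n}$ via the Torres conditions, with the $n$-dependence entering affinely through $t^{2(n+1)}$ exactly as you describe. The paper pins down the unit by explicitly dividing by the Torres factors $(x^2-1)$ and $(x^4-1)/(x-1)$ rather than by normalization and symmetry, but this is a cosmetic difference, and your Burau and census checks are sensible verifications rather than a departure in method.
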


\begin{proof}
  Let $L$ be the link as in Figure~\ref{fig:knot_0}.
  The multivariable Alexander polynomial of $L$ can be readily computed using \cite{CDGsnappy,Kodama}:
\[
\begin{split}
\Delta_L(x,y,z) &=x^7y^3z-x^5y^3z+x^5y^2z+x^5y^2-x^4y^2-x^5y\\
&\quad -2x^3y^2z+2x^4y+x^2y^2z+x^3yz-x^2yz-x^2y+x^2-1,
\end{split}
\]
where the variables $x,y,z$ correspond to the (oriented) meridians of $K$, $C_1$, $C_2$, respectively.

If we perform $(-1)$-surgery on $C_1$ and $(-\frac{1}{n+1})$-surgery on $C_2$, then
the link $K\cup C_1\cup C_2$ is changed into $K_n\cup C_1^n\cup C_2^n$.
Since these links have homeomorphic exteriors,
the induced isomorphism on the homology groups 
relates their Alexander polynomials; compare \cite{F1954,M2006}.

Let $\mu_{K}$, $\mu_{C_1}$ and $\mu_{C_2}$ be the homology classes of meridians
of $K$, $C_1$, $C_2$, respectively.
Each meridian has linking number one with the corresponding component.
Furthermore, let  $\lambda_{C_1}$ and $\lambda_{C_2}$ be the homology classes of their oriented longitudes.
We see that $\lambda_{C_1}=4\mu_{K}$ and $\lambda_{C_2}=2 \mu_{K}$.

Next, 
let $\mu_{K_n}$, $\mu_{C_1^n}$ and $\mu_{C_2^n}$ be the homology classes of meridians 
of $K_n$, $C_1^n$ and $C_2^n$.
Then we have that
$\mu_{K_n}=\mu_{K}$, $\mu_{C_1^n}=-\mu_{C_1}+ \lambda_{C_1}$, $\mu_{C_2^n}=-\mu_{C_2}+(n+1)\lambda_{C_2}$.
Hence
\[
\mu_{K}=\mu_{K_n}, \quad \mu_{C_1}=-\mu_{C_1^n}+4\mu_{K_n}, \quad \mu_{C_2}=-\mu_{C_2^n}+2(n+1)\mu_{K_n}.
\]
Thus, we have the relation between the Alexander polynomials as
\begin{equation}\label{eq:relation-alexander}
\Delta_{K_n\cup C_1^n\cup C_2^n}(x,y,z)=\Delta_{L}(x,x^{4}y^{-1},x^{2(n+1)}z^{-1} ).
\end{equation}

Since $\mathrm{lk}(K_n,C_2^n)=\mathrm{lk}(K,C_2)=2$ and $\mathrm{lk}(C_1^n,C_2^n)=\mathrm{lk}(C_1,C_2)=0$,
 the Torres condition \cite{T1953} gives
\begin{align*}
\Delta_{K_n\cup C_1^n\cup C_2^n}(x,y,1) &= (x^2y^0-1)\Delta_{K_n\cup C_1^n}(x,y)\\
&= (x^2-1)\Delta_{K_n\cup C_1^n}(x,y).
\end{align*}

Furthermore, since $\mathrm{lk}(K_n,C_1^n)=\mathrm{lk}(K,C_1)=4$, 
\[
\Delta_{K_n\cup C_1^n}(x,1)=\frac{x^4-1}{x-1}\Delta_{K_n}(x).
\]
Thus
\[
\Delta_{K_n}(x)=\frac{x-1}{x^4-1}\Delta_{K_n\cup C_1^n}(x,1)=\frac{x-1}{(x^4-1)(x^2-1)}\Delta_{K_n\cup C_1^n\cup C_2^n}(x,1,1).
\]
Then the relation (\ref{eq:relation-alexander}) gives
\[
\begin{split}
\Delta_{K_n}(t)&=\frac{t-1}{(t^4-1)(t^2-1)} \Delta_L(t,t^{4},t^{2(n+1)})\\
&=\frac{1}{(t^4-1)(t+1)} 
(t^{2n+21}-t^{2n+19}+t^{2n+15}-2t^{2n+13}+t^{2n+12}+t^{2n+9}-t^{2n+8}\\
&\quad + t^{13}-t^{12}-t^9+2t^8-t^6+t^2-1)\\
&=\frac{1}{(t^4-1)(t+1)} ( t^{2n+13}(t^8-1)-t^{2n+15}(t^4-1)-t^{2n+9}(t^4-1)\\
&\quad +t^{2n+8}(t^4-1)+t^9(t^4-1)-t^8(t^4-1)-t^2(t^4-1)+(t^8-1)           )\\
%\end{split}
%\]
%\[
%\begin{split}
&=\frac{1}{t+1} ( t^{2n+13}(t^4+1)-t^{2n+15}
-t^{2n+9}+t^{2n+8}+t^9-t^8-t^2+(t^4+1) )\\
%&= \frac{1}{t+1} ( t^{2n+17}-t^{2n+15}+t^{2n+13}-t^{2n+9}+t^{2n+8}+t^9-t^8+t^4-t^2+1) \\
&= \frac{1}{t+1} ( t^{2n+15}(t^2-1) +t^{2n+9}(t^4-1)+t^9(t^{2n-1}+1)-t^4(t^4-1)-(t^2-1))\\
&=t^{2n+15}(t-1)+t^{2n+9}(t^2+1)(t-1)+t^9 \frac{t^{2n-1}+1}{t+1}-t^4(t^2+1)(t-1)-(t-1) \\
&= t^{2n+16}-t^{2n+15} + t^{2n+12}-t^{2n+11}+ t^{2n+10}-t^{2n+9}  +t^9 \left( \sum_{i=1}^{n-1}(t^{2i}-t^{2i-1}) + 1\right) \\
& \quad  -t^7+t^6-t^5+t^4-t+1.
\end{split}
\]
\end{proof}

In Definition~\ref{def:formal} we have recalled the notion of a formal semigroup of an L-space knot. While the proof that $K_n$
are L-space knots is given only in Section~\ref{sec:montesinos}, it is convenient to determine the $\Upsilon$ function
of $K_n$ from the Alexander polynomial we just computed.

First, we show that $K_n$ is not algebraic.
\begin{lemma}\label{lem:formal}
The formal semigroup $S_{K_n}$ of $K_n$ is not closed under addition.
\end{lemma}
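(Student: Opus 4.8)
The plan is to read the formal semigroup $S_{K_n}$ directly off the Alexander polynomial of Lemma~\ref{lem:alex}, and then to exhibit a single pair of elements, valid uniformly in $n$, whose sum escapes $S_{K_n}$. Recall from Section~\ref{sec:review} that $S_{K_n}$ is characterized by the generating-function identity $\frac{\Delta_{K_n}(t)}{1-t}=\sum_{s\in S_{K_n}}t^s$. Expanding $\frac{1}{1-t}=\sum_{j\ge 0}t^j$, the coefficient of $t^m$ in $\frac{\Delta_{K_n}}{1-t}$ is the partial sum $P_m=\sum_{j=0}^m [\Delta_{K_n}]_j$ of the coefficients of $\Delta_{K_n}$ up to degree $m$; since this coefficient equals $1$ when $m\in S_{K_n}$ and $0$ otherwise, membership $m\in S_{K_n}$ is equivalent to $P_m=1$.

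The key observation is that the low-degree part of $\Delta_{K_n}$ does not depend on $n$. Inspecting Lemma~\ref{lem:alex}, the coefficients of $\Delta_{K_n}$ in degrees $0$ through $9$ are $1,-1,0,0,1,-1,1,-1,0,1$ for every $n\ge 1$, and all $n$-dependent terms have degree at least $10$. Consequently the partial sums $P_m$ for $m\le 9$ are independent of $n$, and a short computation records $P_4=1$, so that $4\in S_{K_n}$, and $P_8=0$, so that $8\notin S_{K_n}$.

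Since $4\in S_{K_n}$ while $4+4=8\notin S_{K_n}$, the set $S_{K_n}$ fails to be closed under addition for every $n\ge 1$, which is the assertion of the lemma. The only point requiring any care—and the step I would treat as the main (minor) obstacle—is confirming that the $n$-dependent portion of $\Delta_{K_n}$ genuinely begins in degree $10$, so that the values of $P_4$ and $P_8$ are unaffected by it and the single witness pair $(4,4)$ works simultaneously for all $n$. This is immediate from the shape of the polynomial in Lemma~\ref{lem:alex}: the $n$-dependent contributions are the top terms $t^{2n+16},t^{2n+15},t^{2n+12},\ldots$ and the summation $\sum_{i=1}^{n-1}(t^{2i+9}-t^{2i+8})$, whose lowest exponent is $2\cdot 1+8=10$.
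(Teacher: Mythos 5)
Your proposal is correct and follows essentially the same route as the paper: the paper likewise reads off from Lemma~\ref{lem:alex} that $S_{K_n}$ begins $0,4,6,9,\dots$ and concludes from $4\in S_{K_n}$, $8\notin S_{K_n}$. You merely make explicit the partial-sum computation of the coefficients of $\Delta_{K_n}(t)/(1-t)$ and the (correct) check that the $n$-dependent terms start in degree $10$, which the paper leaves implicit.
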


\begin{proof}
By Lemma \ref{lem:alex},
the formal semigroup $S_{K_n}$ of $K_n$ starts
\[
0,4,6,9,11,12,\dots, 2n+5,\dots.
\]
Thus $4\in S_{K_n}$, but $8\not\in S_{K_n}$.
\end{proof}

\begin{lemma}\label{lem:upsilon}
For $n\ge 1$, 
the Upsilon function of $K_n$ is given as
\[
\Upsilon_{K_n}(t)=
\begin{cases}
-(n+8)t & (0\le t \le \frac{1}{2})\\
-(n+4)t-2 & (\frac{1}{2}\le  t \le \frac{4}{5}) \\
-(n-1)t-6 & (\frac{4}{5} \le t \le 1).
\end{cases}
\]
For $t\in [1,2]$, $\Upsilon_{K_n}(t)=\Upsilon_{K_n}(2-t)$.
\end{lemma}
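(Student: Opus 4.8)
My plan is to exploit the fact—granted here and established in Section~\ref{sec:montesinos}—that $K_n$ is an L-space knot, so that $\cfk^\infty(K_n)$ is the \emph{staircase complex} built from the Alexander polynomial of Lemma~\ref{lem:alex} (see \cite{OSlens,Upsilon}). For such a complex the generator of $H_0(\cfk^\infty)$ is represented by any single one of the ``outer corner'' generators $z_0,z_2,\dots,z_{2\ell}$, so unwinding the definition of $\nu(t)$ recalled in Section~\ref{sec:review} gives
\[
\nu(t)=\min_{0\le i\le\ell}\cF_t(z_{2i}),\qquad \Upsilon_{K_n}(t)=-2\nu(t)=\max_{0\le i\le\ell}\bigl[-tA_i-(2-t)B_i\bigr],
\]
where $(A_i,B_i)$ is the bifiltration level of $z_{2i}$. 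This is precisely the Fenchel--Legendre transform of the semigroup counting function $I(m)=\#(S_{K_n}\cap[0,m))$ of \cite{BorodzikHedden}, and in particular it exhibits $\Upsilon_{K_n}$ as a convex, piecewise-linear function that is symmetric about $t=1$.

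The first concrete step is to read off the coordinates $(A_i,B_i)$ from $\Delta_{K_n}$. Writing $\Delta_{K_n}=\sum_{k=0}^{2\ell}(-1)^k t^{\alpha_k}$ with $\alpha_0>\dots>\alpha_{2\ell}$, the horizontal and vertical step lengths of the staircase are the consecutive differences $\alpha_{k-1}-\alpha_k$, and the outer corners sit at $A_i=\sum_{j=1}^{i}(\alpha_{2j-2}-\alpha_{2j-1})$ and $B_i=g-\sum_{j=1}^{i}(\alpha_{2j-1}-\alpha_{2j})$, with $g=n+8$. From the explicit exponents in Lemma~\ref{lem:alex} these differences are all equal to $1$ except for a few near the top and—by the palindromic symmetry of $\Delta_{K_n}$—near the bottom; carrying out the telescoping sums then produces an explicit $n$-dependent list of corner points, the relevant ones near the bottom-right being $(n+8,0)$, $(n+5,1)$ and $(n+2,3)$.

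It then remains to determine the upper envelope of the affine functions $f_i(t)=-tA_i-(2-t)B_i$ on $[0,1]$, i.e.\ the lower-left convex hull of the point set $\{(A_i,B_i)\}$. I expect to show that on $[0,1]$ exactly the three corners $(n+8,0)$, $(n+5,1)$, $(n+2,3)$ are active, yielding the three affine pieces with slopes $-(n+8),-(n+4),-(n-1)$; the two interior breakpoints are then located by solving $\cF_t(n+8,0)=\cF_t(n+5,1)$ and $\cF_t(n+5,1)=\cF_t(n+2,3)$, which reduce to $2t=1$ and $\tfrac{5t}{2}=2$, giving $t=\tfrac12$ and $t=\tfrac45$. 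Finally, the stated values on $[1,2]$ follow from the general symmetry $\Upsilon_{K}(t)=\Upsilon_K(2-t)$.

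The main obstacle is the convex-hull step, carried out uniformly in $n$: I must verify that none of the remaining corners ever rises above the three-piece envelope, in particular the ``skipped'' corner $(n+4,2)$ and the entire generic family of unit-step corners $(i,\,n+5-i)$ with $3\le i\le n+2$, which all lie on the antidiagonal $A+B=n+5$. Concretely this amounts to checking that $(n+4,2)$ lies strictly above the hull edge joining $(n+2,3)$ and $(n+5,1)$, and that among the antidiagonal corners the minimizer of $\cF_t$ for $t<1$ is always the right-most one $(n+2,3)$; both are short linear inequalities, but they must be organized so as to hold for every $n\ge1$.
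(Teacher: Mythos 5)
Your proposal is correct and follows essentially the same route as the paper: both read the staircase off the Alexander polynomial of Lemma~\ref{lem:alex}, take the convex hull of the resulting corner data, and obtain $\Upsilon_{K_n}$ as the associated Legendre-type transform, with the breakpoints $t=\tfrac12,\tfrac45$ coming from the same active vertices $(n+8,0),(n+5,1),(n+2,3)$ (the paper phrases this via the gap function of \cite{BorodzikHedden} rather than directly via $\nu(t)$, but the computation is identical). The linear inequalities you defer --- that $(n+5,1)$ dominates $(n+4,2)$ for $t<1$ and that the right-most antidiagonal corner $(n+2,3)$ dominates all $(i,n+5-i)$ with $3\le i<n+2$ for $t\le 1$ --- do hold uniformly in $n\ge 1$, so the plan goes through.
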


Figure \ref{fig:upsilon} shows $\Upsilon_{K_n}(t)$ when $n=1$ and $n\ge 2$.

\begin{figure}[H]
\includegraphics*[scale=0.8]{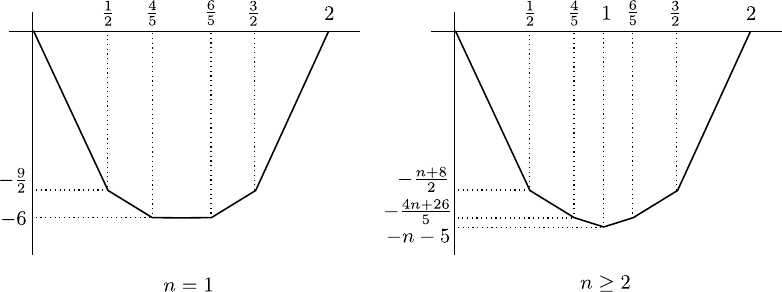}
\caption{The Upsilon function $\Upsilon_{K_n}(t)$.  Left shows the case $n=1$, and Right shows the case $n\ge 2$. }
\label{fig:upsilon}
\end{figure}

\begin{proof}
By \cite{BorodzikHedden}, the Upsilon function is the Fenchel--Legendre transform of the gap function $G(x)=2J(-x)$, in their notation,  determined by the Alexander polynomial.
In fact, there is a handy description of the graph of $G(x)$ as follows.
Let us write $\Delta_{K_n}(t)$ as $t^{a_0}-t^{a_1}+t^{a_2}-\dots+t^{a_{2n}}$.
Then the sequence of the jumps in the exponents is given as
\begin{equation}\label{eq:gap}
a_1-a_0,a_2-a_1,\dots, a_{2n}-a_{2n-1}.
\end{equation}

Consider the vectors $\boldsymbol{u}=(1,2)$ and $\boldsymbol{h}=(1,0)$ on $\mathbb{R}^2$.
Then \cite[Lemma 2.2]{Te2024} shows that
the graph of $G(x)$ restricted on $[-g, g]$ has a form of staircase specified by (\ref{eq:gap}). 
More precisely, we start at the point $(-g, 0)$, and move along $\boldsymbol{u}$ $a_1 -a_0$ times, 
along  $\boldsymbol{h}$ $a_2 -a_1$ times, and so on. Finally, we reach the point  $(g, 2g)$.
The function $G(x)$ is $0$ for $x\le -g$, and $2x$ for $x\ge g$.

By Lemma \ref{lem:alex},
the sequence of the jumps is
\[
1,3,1,1,1,2, \underbrace{1,1, \dots, 1,1}_{2n-2},2,1,1,1,3,1.
\]
Thus the graph of $G(x)$ goes through the points
$(-g,0)=(-n-8,0)$, $(-n-4,2)$, $(-n+1,6)$, $(n-1,2n+4)$, $(n+4,2n+10)$ and $(g,2g)=(n+8,2n+16)$,
which determine the convex hull.
More precisely, the convex hull of $G(x)$ is determined by the lines

\[
\begin{cases}
y=0 & (x\le -n-8) \\
y=\frac{x+n+8}{2} & (-n-8\le x\le -n-4)\\
y=\frac{4}{5}(x+n+4)+2 & ( -n-4\le x \le -n+1) \\
y=x+n+5 & (-n+1\le x \le n-1) \\
y=\frac{6(x-n+1)}{5}+2n+4 & (n-1\le x \le n+4) \\
y=\frac{3(x-n-4)}{2}+2n+10 & (n+4 \le x \le n+8) \\
y=2x & (x\ge n+8).
\end{cases}
\]
See Figure~\ref{fig:m111} for the case $n=1$.
Then the Fenchel--Legendre transformation immediately gives the conclusion.
\end{proof}
\begin{figure}[H]
\begin{tikzpicture}
\begin{scope}[xshift = -5.2cm,scale=0.8]
%\draw[thin,black!50,dashed] (-1.4,-1.5555555555555556) -- (7,-1.5555555555555556);
%\draw[thin,black!50,dashed] (-1.5555555555555556,-1.4) -- (-1.5555555555555556,7);
%\draw[thin,black!50,dashed] (-1.4,-0.7777777777777778) -- (7,-0.7777777777777778);
%\draw[thin,black!50,dashed] (-0.7777777777777778,-0.5) -- (-0.7777777777777778,7);
\draw[thin,black!50,dashed] (-0.5,0.0) -- (7,0.0);
\draw[thin,black!50,dashed] (0.0,-0.5) -- (0.0,7);
\draw (-0.2,-0.2) node [scale = 0.8] {$0$};
\draw (-0.2,-0.2) node [scale = 0.8] {$0$};
\draw[thin,black!50,dashed] (-0.5,0.7777777777777778) -- (7,0.7777777777777778);
\draw[thin,black!50,dashed] (0.7777777777777778,-0.5) -- (0.7777777777777778,7);
\draw (0.5777777777777777,-0.2) node [scale = 0.8] {$1$};
\draw (-0.2,0.5777777777777777) node [scale = 0.8] {$1$};
\draw[thin,black!50,dashed] (-0.5,1.5555555555555556) -- (7,1.5555555555555556);
\draw[thin,black!50,dashed] (1.5555555555555556,-0.5) -- (1.5555555555555556,7);
\draw (1.3555555555555556,-0.2) node [scale = 0.8] {$2$};
\draw (-0.2,1.3555555555555556) node [scale = 0.8] {$2$};
\draw[thin,black!50,dashed] (-0.5,2.3333333333333335) -- (7,2.3333333333333335);
\draw[thin,black!50,dashed] (2.3333333333333335,-0.5) -- (2.3333333333333335,7);
\draw (2.1333333333333333,-0.2) node [scale = 0.8] {$3$};
\draw (-0.2,2.1333333333333333) node [scale = 0.8] {$3$};
\draw[thin,black!50,dashed] (-0.5,3.111111111111111) -- (7,3.111111111111111);
\draw[thin,black!50,dashed] (3.111111111111111,-0.5) -- (3.111111111111111,7);
\draw (2.911111111111111,-0.2) node [scale = 0.8] {$4$};
\draw (-0.2,2.911111111111111) node [scale = 0.8] {$4$};
\draw[thin,black!50,dashed] (-0.5,3.888888888888889) -- (7,3.888888888888889);
\draw[thin,black!50,dashed] (3.888888888888889,-0.5) -- (3.888888888888889,7);
\draw (3.6888888888888887,-0.2) node [scale = 0.8] {$5$};
\draw (-0.2,3.6888888888888887) node [scale = 0.8] {$5$};
\draw[thin,black!50,dashed] (-0.5,4.666666666666667) -- (7,4.666666666666667);
\draw[thin,black!50,dashed] (4.666666666666667,-0.5) -- (4.666666666666667,7);
\draw (4.466666666666667,-0.2) node [scale = 0.8] {$6$};
\draw (-0.2,4.466666666666667) node [scale = 0.8] {$6$};
\draw[thin,black!50,dashed] (-0.5,5.444444444444445) -- (7,5.444444444444445);
\draw[thin,black!50,dashed] (5.444444444444445,-0.5) -- (5.444444444444445,7);
\draw (5.2444444444444445,-0.2) node [scale = 0.8] {$7$};
\draw (-0.2,5.2444444444444445) node [scale = 0.8] {$7$};
\draw[thin,black!50,dashed] (-0.5,6.222222222222222) -- (7,6.222222222222222);
\draw[thin,black!50,dashed] (6.222222222222222,-0.5) -- (6.222222222222222,7);
\draw (6.022222222222222,-0.2) node [scale = 0.8] {$8$};
\draw (-0.2,6.022222222222222) node [scale = 0.8] {$8$};
\draw[->,thick] (-0.5,0) -- (7,0);
\draw[->,thick] (0,-0.5) -- (0,7);
\draw[blue!50!black,thick] (0.0,7.0) -- (0.7777777777777778,7.0);
\draw[blue!50!black,thick] (0.7777777777777778,7.0) -- (0.7777777777777778,6.222222222222222);
\draw[blue!50!black,thick] (0.7777777777777778,6.222222222222222) -- (0.7777777777777778,5.444444444444445);
\draw[blue!50!black,thick] (0.7777777777777778,5.444444444444445) -- (0.7777777777777778,4.666666666666667);
\draw[blue!50!black,thick] (0.7777777777777778,4.666666666666667) -- (1.5555555555555556,4.666666666666667);
\draw[blue!50!black,thick] (1.5555555555555556,4.666666666666667) -- (1.5555555555555556,3.888888888888889);
\draw[blue!50!black,thick] (1.5555555555555556,3.888888888888889) -- (2.3333333333333335,3.888888888888889);
\draw[blue!50!black,thick] (2.3333333333333335,3.888888888888889) -- (2.3333333333333335,3.111111111111111);
\draw[blue!50!black,thick] (2.3333333333333335,3.111111111111111) -- (2.3333333333333335,2.3333333333333335);
\draw[blue!50!black,thick] (2.3333333333333335,2.3333333333333335) -- (3.111111111111111,2.3333333333333335);
\draw[blue!50!black,thick] (3.111111111111111,2.3333333333333335) -- (3.888888888888889,2.3333333333333335);
\draw[blue!50!black,thick] (3.888888888888889,2.3333333333333335) -- (3.888888888888889,1.5555555555555556);
\draw[blue!50!black,thick] (3.888888888888889,1.5555555555555556) -- (4.666666666666667,1.5555555555555556);
\draw[blue!50!black,thick] (4.666666666666667,1.5555555555555556) -- (4.666666666666667,0.7777777777777778);
\draw[blue!50!black,thick] (4.666666666666667,0.7777777777777778) -- (5.444444444444445,0.7777777777777778);
\draw[blue!50!black,thick] (5.444444444444445,0.7777777777777778) -- (6.222222222222222,0.7777777777777778);
\draw[blue!50!black,thick] (6.222222222222222,0.7777777777777778) -- (7.0,0.7777777777777778);
\draw[blue!50!black,thick] (7.0,0.7777777777777778) -- (7.0,0.0);
\draw[red!50!black,thick] (0.0,7.0) -- (0.7777777777777778,4.666666666666667);
\draw[red!50!black] (0.1888888888888889,5.533333333333334) node [scale=0.8] {$-\frac{3}{1}$};
\draw[red!50!black,thick] (0.7777777777777778,4.666666666666667) -- (2.3333333333333335,2.3333333333333335);
\draw[red!50!black] (1.2555555555555555,3.2) node [scale=0.8] {$-\frac{3}{2}$};
\draw[red!50!black,thick] (2.3333333333333335,2.3333333333333335) -- (4.666666666666667,0.7777777777777778);
\draw[red!50!black] (3.2,1.2555555555555555) node [scale=0.8] {$-\frac{2}{3}$};
\draw[red!50!black,thick] (4.666666666666667,0.7777777777777778) -- (7.0,0.0);
\draw[red!50!black] (5.233333333333334,0.2888888888888889) node [scale=0.8] {$-\frac{1}{3}$};
\fill[red!30!black,thick] (0.0,7.0) circle (0.08);
\draw(-0.4,6.8) node [scale=0.8] {(0,9)};
\fill[red!30!black,thick] (0.7777777777777778,4.666666666666667) circle (0.08);
\draw(0.6777777777777778,4.266666666666667) node [scale=0.8] {(1,6)};
\fill[red!30!black,thick] (2.3333333333333335,2.3333333333333335) circle (0.08);
\draw(2.2333333333333334,2.0333333333333333) node [scale=0.8] {(3,3)};
\fill[red!30!black,thick] (4.666666666666667,0.7777777777777778) circle (0.08);
\draw(4.366666666666667,0.5777777777777777) node [scale=0.8] {(6,1)};
\fill[red!30!black,thick] (7.0,0.0) circle (0.08);
\draw(6.7,-0.3) node [scale=0.8] {(9,0)};
\fill[blue!30!black,thick] (0.0,7.0) circle (0.08);
\fill[blue!30!black,thick] (0.7777777777777778,7.0) circle (0.08);
\fill[blue!30!black,thick] (0.7777777777777778,4.666666666666667) circle (0.08);
\fill[blue!30!black,thick] (1.5555555555555556,4.666666666666667) circle (0.08);
\fill[blue!30!black,thick] (1.5555555555555556,3.888888888888889) circle (0.08);
\fill[blue!30!black,thick] (2.3333333333333335,3.888888888888889) circle (0.08);
\fill[blue!30!black,thick] (2.3333333333333335,2.3333333333333335) circle (0.08);
\fill[blue!30!black,thick] (3.888888888888889,2.3333333333333335) circle (0.08);
\fill[blue!30!black,thick] (3.888888888888889,1.5555555555555556) circle (0.08);
\fill[blue!30!black,thick] (4.666666666666667,1.5555555555555556) circle (0.08);
\fill[blue!30!black,thick] (4.666666666666667,0.7777777777777778) circle (0.08);
\fill[blue!30!black,thick] (7.0,0.7777777777777778) circle (0.08);
\fill[blue!30!black,thick] (7.0,0.0) circle (0.08);
\end{scope}
\begin{scope}[xshift=4.5cm, yshift = -0.05cm, scale = 0.28]
  \clip (-13,-1) rectangle (11,21);
  \draw[dashed, black!30!white] (-13,0) grid (11,20);
  \draw[->] (-13,0) -- (11,0);
  \draw[->] (0,-1) -- (0,20);
  \coordinate(A0) at (-13,0);
  \coordinate(A1) at (-9,0);
  \coordinate(A2) at (-8,2);
  \coordinate(A3) at (-5,2);
  \coordinate(A4) at (-4,4);
  \coordinate(A5) at (-3,4);
  \coordinate(A6) at (-2,6);
  \coordinate(A7) at (0,6);
  \coordinate(A8) at (2,10);
  \coordinate(A9) at (3,10);
  \coordinate(A10) at (4,12);
  \coordinate(A11) at (5,12);
  \coordinate(A12) at (8,18);
  \coordinate(A13) at (9,18);
  \coordinate(A14) at (11,22);
  \draw[ultra thick, blue!30!black] (A0) -- (A1) -- (A2) -- (A3) -- (A4) -- (A5) -- (A6) -- (A7) -- (A8) -- (A9) -- (A10) -- (A11) -- (A12) -- (A13) -- (A14);
  \draw[thick,red] (A1) -- (A3) -- (A7) -- (A11) -- (A13);
  \fill[blue!70!black] (A1) circle (0.20);
  \fill[blue!70!black] (A2) circle (0.20);
  \fill[blue!70!black] (A3) circle (0.20);
  \fill[blue!70!black] (A4) circle (0.20);
  \fill[blue!70!black] (A5) circle (0.20);
  \fill[blue!70!black] (A6) circle (0.20);
  \fill[blue!70!black] (A7) circle (0.20);
  \fill[blue!70!black] (A8) circle (0.20);
  \fill[blue!70!black] (A9) circle (0.20);
  \fill[blue!70!black] (A10) circle (0.20);
  \fill[blue!70!black] (A11) circle (0.20);
  \fill[blue!70!black] (A12) circle (0.20);
  \fill[blue!70!black] (A13) circle (0.20);
  \draw(5,-0.2) -- (5,0.2); \draw[thick] (5,-0.6) node [scale=0.8] {$5$};
  \draw(-5,-0.2) -- (-5,0.2); \draw[thick] (-5,-0.6) node [scale=0.8] {$-5$};
  \draw(10,-0.2) -- (10,0.2); \draw[thick] (10,-0.6) node [scale=0.8] {$10$};
  \draw(-10,-0.2) -- (-10,0.2); \draw[thick] (-10,-0.6) node [scale=0.8] {$-10$};
  \draw(-0.2,5) -- (0.2,5); \draw (0.6,5) node [scale=0.9] {$5$};
  \draw(-0.2,10) -- (0.2,10); \draw (0.6,10) node [scale=0.9] {$10$};
  \draw(-0.2,15) -- (0.2,15); \draw (0.6,15) node [scale=0.9] {$15$};
\end{scope}

\end{tikzpicture}    
\caption{The knot $K_1$.
Left: the staircase. Right: the graph of the gap function. The red lines are piecewise-linear convex supporting functions, used to compute
the Fenchel--Legendre transform.}\label{fig:m111}
\end{figure}
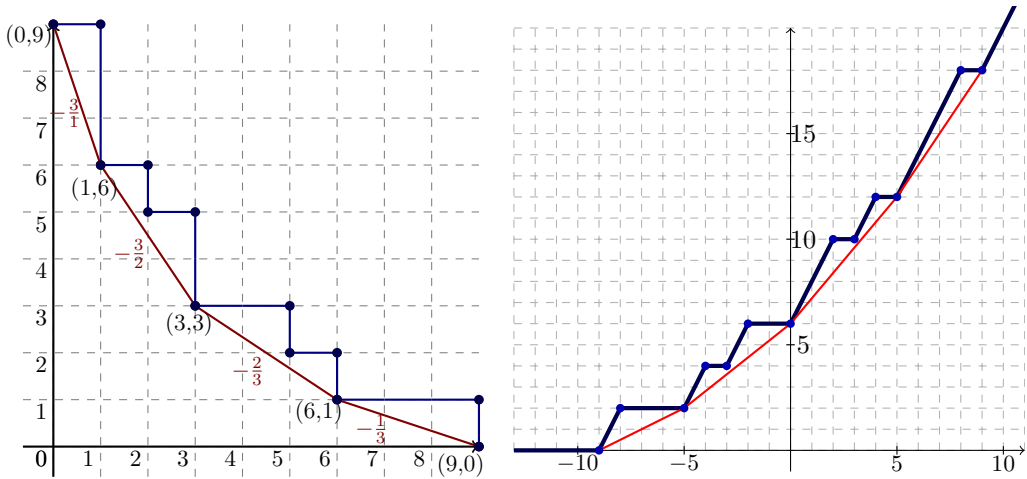

%%%%%%%%%%%

In the next section, we will prove that $(4n+24)$-surgery on $K_n$ yields an L-space by using the Montesinos trick.
Once we admit it, it is easy to prove that $K_n$ is hyperbolic.

\begin{lemma}\label{lem:hyperbolic}
$K_n$ is hyperbolic.
\end{lemma}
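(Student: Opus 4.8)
The plan is to invoke the geometric classification of knots in $S^3$: every nontrivial knot is a torus knot, a satellite knot, or hyperbolic. Being an L-space knot (Theorem~\ref{thm:integral_n}(2), proved in Section~\ref{sec:montesinos}), $K_n$ is prime, and it is nontrivial since its genus is $n+8$. Hence it suffices to exclude the torus and satellite cases, and in both the decisive input will be the failure of the semigroup property recorded in Lemma~\ref{lem:formal}.

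Ruling out torus knots is immediate: the formal semigroup of $T_{p,q}$ is the numerical semigroup $\langle p,q\rangle$, which is closed under addition, whereas $S_{K_n}$ is not. To exclude satellites I would use the braid-index bound $b(K_n)\le 4$, which is clear from the $4$-braid presentation. A satellite knot admitting an L-space surgery must be a cable $C_{a,b}$ with winding number $a\ge 2$ whose companion $C$ is itself a nontrivial L-space knot; by Williams' theorem on the braid index of cables, $b(K_n)=a\cdot b(C)$. As $C$ is nontrivial, $b(C)\ge 2$, so $b(K_n)=a\cdot b(C)\ge 4$, and together with $b(K_n)\le 4$ this forces $a=2$ and $b(C)=2$. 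A knot of braid index $2$ is a $(2,2k+1)$-torus knot, so $K_n$ would be the $(2,b)$-cable of some $T_{2,2k+1}$, an iterated torus knot.

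Finally I would contradict Lemma~\ref{lem:formal}. For a cable L-space knot the Alexander polynomial factors as $\Delta_{C_{a,b}}(t)=\Delta_C(t^a)\,\Delta_{T_{a,b}}(t)$, and in the L-space range a direct computation identifies the formal semigroup with the numerical semigroup generated by $\{as: s\in S_C\}\cup\{b\}$; compare \cite{Wang,Te2024}. Since the companion $T_{2,2k+1}$ has $S_C=\langle 2,2k+1\rangle$ a genuine semigroup, $S_{K_n}$ would then be closed under addition, contradicting Lemma~\ref{lem:formal}. Thus $K_n$ is neither a torus knot nor a satellite, and the geometric classification yields that $K_n$ is hyperbolic.

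The hard part is the satellite case. It leans on two external inputs --- the structural fact that a satellite L-space knot is a cable of an L-space knot, and Williams' cable braid-index formula --- together with the semigroup computation for cables. Note that the bound $b(K_n)\le 4$ is exactly what is needed: with $a\ge 2$ and $b(C)\ge 2$ it pins the companion down to braid index $2$, hence to a torus knot, which is precisely the configuration whose formal semigroup is forced to be genuine. Verifying that the formal semigroup of the resulting iterated torus knot really is a semigroup, rather than merely resembling one, is the step to be carried out with care.
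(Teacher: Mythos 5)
Your torus-knot exclusion and your endgame (iterated torus L-space knots have formal semigroups closed under addition, contradicting Lemma~\ref{lem:formal}) coincide with the paper's. The genuine gap is in the satellite case: you assert as a ``structural fact'' that a satellite knot admitting an L-space surgery must be a cable of an L-space knot. This is not a theorem, and it is false as stated: there are satellite L-space knots whose patterns are Berge--Gabai knots (one-bridge braids in the solid torus) that are not cable patterns, as constructed by Hom, Lidman and Vafaee. What is actually available is weaker --- the companion and the pattern are L-space knots, and the pattern is braided --- and a braided pattern of winding number $a\ge 3$ need not be a cable. Since your whole reduction (Williams' formula $b(K_n)=a\cdot b(C)$, hence $a=2$, $b(C)=2$) is predicated on already having a cable, the argument does not close.

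The paper gets around exactly this point by working with the bridge number rather than the braid index. From the $4$-braid presentation the bridge number is at most $4$, so Schubert's inequality forces the wrapping number of the pattern to be $2$ and the companion to be $2$-bridge. Only then does the cable structure appear: a \emph{braided} pattern with wrapping (hence winding) number $2$ is the closure of $\sigma_1^b$ in the solid torus, i.e.\ a $(2,b)$-cable pattern, and a $2$-bridge L-space knot is a torus knot $T_{2,2k+1}$. If you want to salvage your braid-index route, you would need an analogue of Schubert's inequality for braided satellites that pins the winding number down to $2$ \emph{before} invoking any cable-specific formula; as written, the step ``satellite L-space knot $\Rightarrow$ cable'' is the missing (and unprovable in this generality) ingredient.
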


\begin{proof}
Recall that a torus knot of type $(p,q)$ $(0<p<q)$ has
the formal semigroup $\langle p,q \rangle=\{ap+bq \mid a, b \ge 0\}$, which is closed under addition.
Hence $K_n$ is not a torus knot by Lemma \ref{lem:formal}.

By \cite{Kr2015}, $K_n$ is prime.
Assume that $K_n$ is a satellite knot for a contradiction.
Since the bridge number of  $K_n$ is at most four, 
the companion is a $2$-bridge knot and the pattern has wrapping number two \cite{Sc1954}.
Since $K_n$ is an L-space knot, the companion and the pattern knot are also L-space knots \cite{BM2019,Ho2016}.
Furthermore,  the pattern is braided.
Thus the companion is a $2$-bridge torus knot by \cite{OSlens}, and $K_n$ is its $2$-cable.
In other words, $K_n$ is an iterated torus L-space knot.
Finally, Wang \cite{Wang} shows that the formal semigroup of such a knot is closed under addition.
This contradicts Lemma \ref{lem:formal}.
\end{proof}

\begin{proof}[Proof of Theorem \ref{thm:integral_n}]
This immediately follows  from
Lemmas \ref{lem:upsilon} and~\ref{lem:hyperbolic}, and Proposition \ref{prop:l-space}.
\end{proof}

\section{Montesinos trick}\label{sec:montesinos}

In this section, we prove that the $(4n+24)$-surgery on $K_n$ yields an L-space by using the Montesinos trick \cite{Mon1975}.
For a surgery diagram of a strongly invertible knot or link, the Montesinos trick describes the resulting closed $3$-manifold as the
double branched cover of another knot or link obtained from the tangle replacements corresponding to the surgery coefficients.

Figure \ref{fig:invertible} shows 
a surgery diagram in a strongly invertible position with the axis $A$.
After performing $(-1)$-surgery, we have $K_n$ with surgery coefficient $4n+24$.

\begin{figure}
\includegraphics*[scale=0.5]{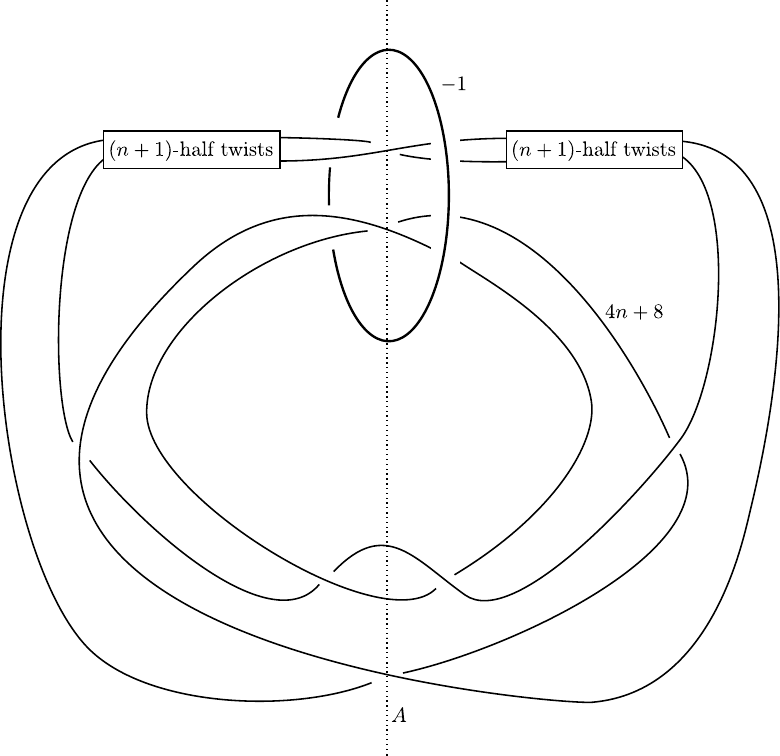}
\caption{
After $(-1)$-surgery, the surgery diagram gives $(4n+24)$-surgery on $K_n$.
Each rectangle box contains right-handed $(n+1)$-half twists.}
\label{fig:invertible}
\end{figure}

Take the quotient of $K_n\cup A$ under the involution along $A$.
Then we obtain a two-component link shown in Figure \ref{fig:mont1}.
The Montesinos trick claims that the resulting manifold of $(4n+24)$-surgery on $K_n$ is 
the double branched cover of $S^3$ branched over this link.
Figures \ref{fig:mont2} and \ref{fig:mont456} show the deformations of the link.

\begin{figure}
\includegraphics*[scale=0.5]{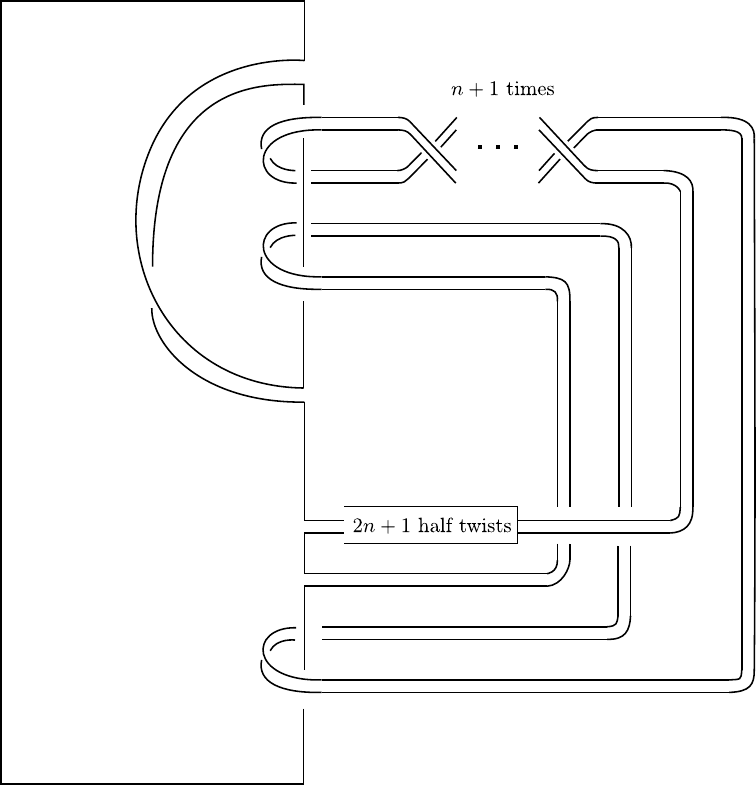}
\caption{The double branched cover of $S^3$ branched over this link gives the resulting manifold of
$(4n+24)$-surgery on $K_n$.
}
\label{fig:mont1}
\end{figure}

\begin{figure}[H]
\includegraphics*[scale=0.45]{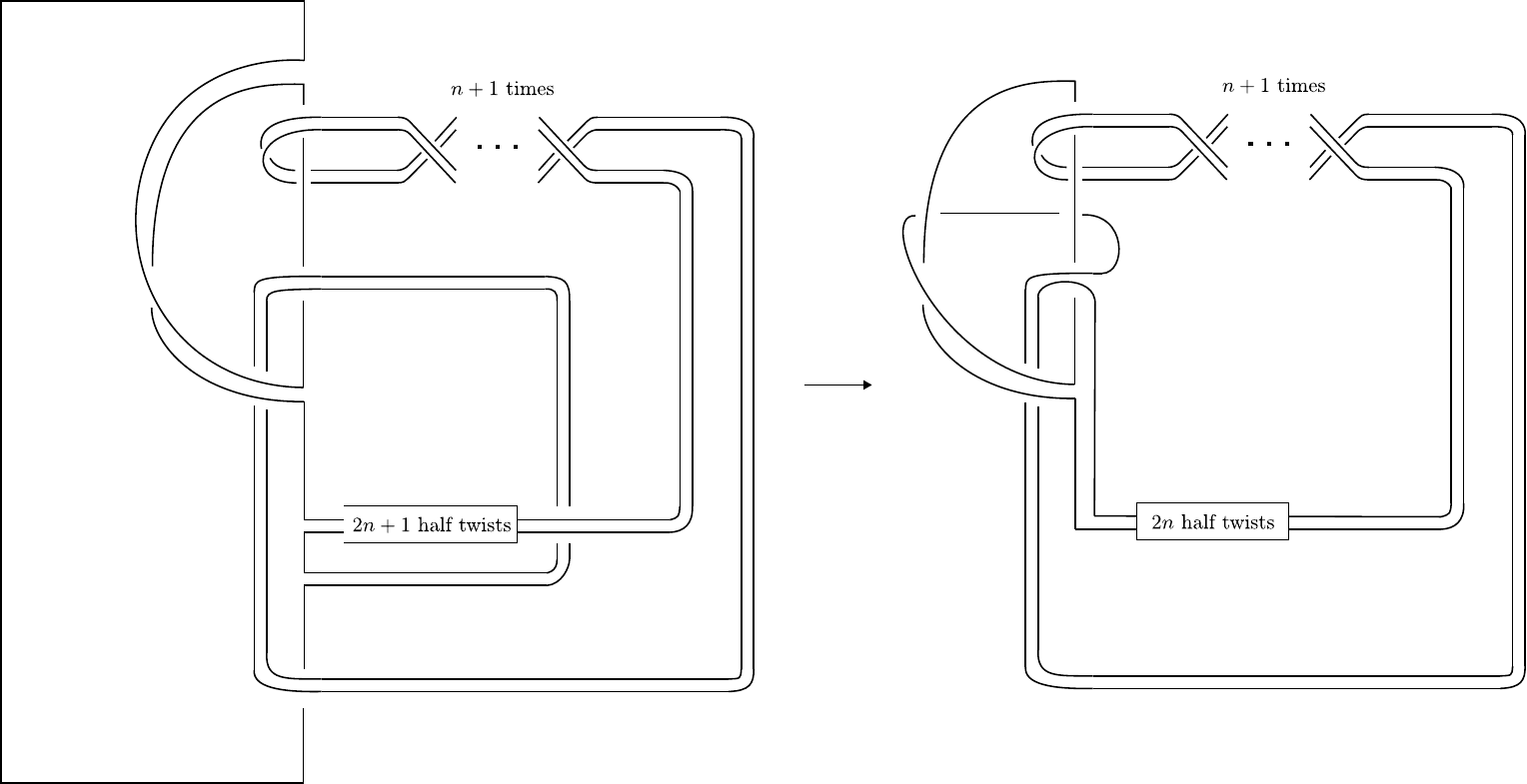}
\caption{Deformation of the link.}
\label{fig:mont2}
\end{figure}

\begin{figure}[H]
\includegraphics*[scale=0.45]{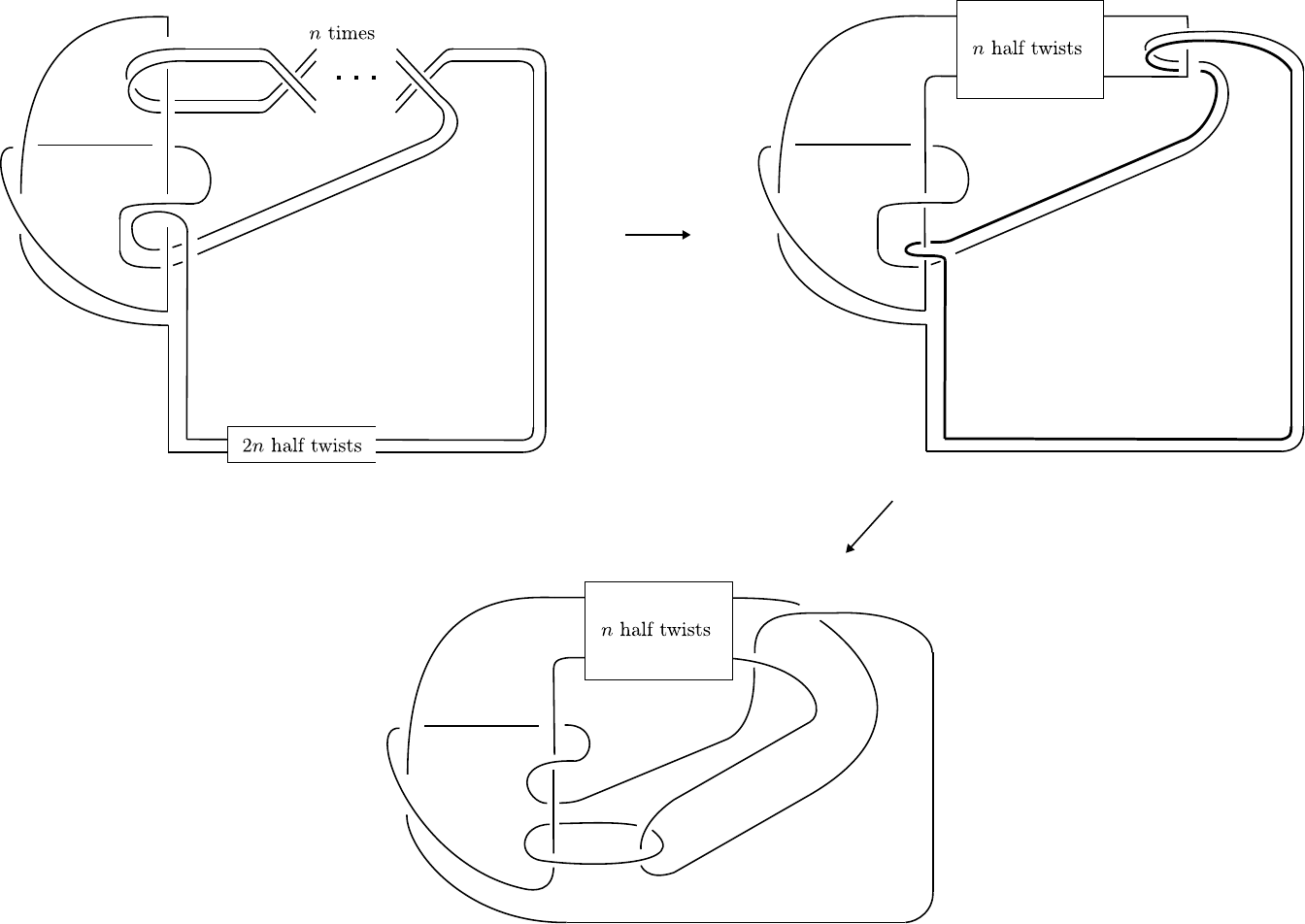}
\caption{Deformation of the link (continued).}
\label{fig:mont456}
\end{figure}

Let us denote this link by $\ell_n$.
We perform two resolutions as shown in Figure \ref{fig:resolution}
at a crossing located in the box with $n$ half twists.
Let $\ell_\infty$ and $\ell_{n-1}$ be the resulting links.
Then it is straightforward to calculate $\det \ell_n=4n+24$, $\det \ell_\infty=4$.
(For example, use the checkerboard colorings of the diagrams as shown in Figures \ref{fig:mont456} and \ref{fig:mont7}.)
This shows the equation $\det \ell_n=\det \ell_{n-1} + \det \ell_\infty$.
Thus if the double branched covers of $\ell_{n-1}$ and $\ell_\infty$ are L-spaces, then so is the double branched cover of $\ell_n$ \cite{BGW2013, OSlens, OSdouble2005}.

\begin{figure}[H]
\includegraphics*[scale=1]{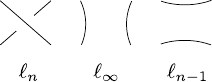}
\caption{Two resolutions.}
\label{fig:resolution}
\end{figure}

As shown in Figure \ref{fig:mont7},  the link $\ell_\infty$ is the Montesinos link $M(-1/2,-1/2,1/2)$.
Thus the double branched cover of $\ell_\infty$ is a Seifert fibered space over the $2$-sphere with three exceptional fibers of indices $2,2,2$,
which is elliptic.  Then it is an L-space \cite{OSlens}.

\begin{figure}[H]
\includegraphics*[scale=0.45]{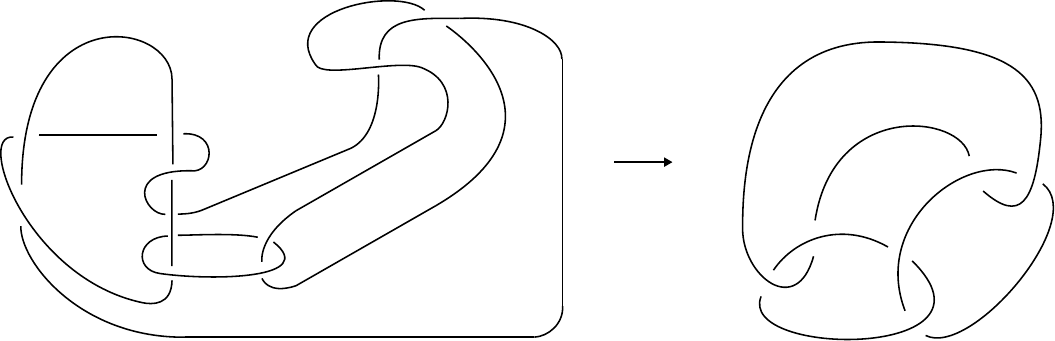}
\caption{The link $\ell_\infty$ with $\det \ell_\infty=4$ is the Montesinos link $M(-1/2,-1/2,1/2)$.
The double branched cover over this link is a Seifert fibered space over the $2$-sphere with three
exceptional fibers of indices $2,2,2$, which is elliptic.}
\label{fig:mont7}
\end{figure}

Inductively, it is enough to show that the double branched cover of $\ell_1$ is an L-space.
However, as shown in Figure \ref{fig:ell1},  $\ell_1$ is  the Montesinos link $M(1/2,1/2,-4/11)$.
The double branched cover is also an elliptic Seifert fibered space, which is an L-space.

\begin{figure}[H]
\includegraphics*[scale=0.45]{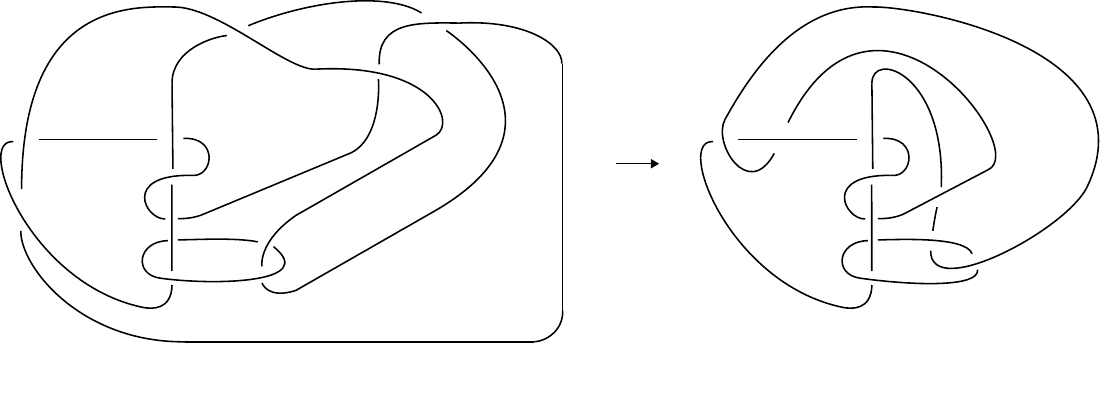}
\caption{The link $\ell_1$ is the Montesinos link $M(1/2,1/2,-4/11)$.}
\label{fig:ell1}
\end{figure}

Thus, we have shown the following.

\begin{proposition}\label{prop:l-space}
For $K_n$, $(4n+24)$-surgery yields an L-space.
\end{proposition}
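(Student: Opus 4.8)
The plan is to realize the result of $(4n+24)$-surgery on $K_n$ as a double branched cover of $S^3$ and then to prove that this cover is an L-space by an inductive skein argument. The starting point is that $K_n$ is strongly invertible. Presenting $K_n$ together with its surgery coefficient in a diagram symmetric about an axis $A$ (after first absorbing the auxiliary $(-1)$-surgery into an integral slope $4n+24$), the Montesinos trick \cite{Mon1975} identifies the surgered manifold with the double branched cover $\Sigma_2(\ell_n)$ of $S^3$, where $\ell_n$ is the quotient link obtained by replacing the fixed-point arcs of the involution by the rational tangle dictated by the surgery slope. Passing to the quotient is the bookkeeping-heavy step: one must verify that the involution descends correctly through the sequence of diagram deformations and that the integral slope translates into the expected tangle in the box of $(n+1)$-half twists.

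Next I would invoke the standard L-space criterion together with the branched-cover determinant formula $|H_1(\Sigma_2(\ell))| = \det\ell$. Resolving a single crossing inside the twist box produces an unoriented skein triple $(\ell_n, \ell_{n-1}, \ell_\infty)$, for which the Ozsv\'ath--Szab\'o unoriented skein exact triangle relates $\widehat{HF}$ of the three branched double covers \cite{OSdouble2005}. The key numerical input is the determinant additivity $\det\ell_n = \det\ell_{n-1} + \det\ell_\infty$, which I would check by computing $\det\ell_n = 4n+24$ and $\det\ell_\infty = 4$ from the Goeritz matrices read off the checkerboard colorings. When the determinants add, the exact triangle forces the third cover to be an L-space as soon as the other two are \cite{BGW2013, OSlens}. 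This sets up an induction on $n$: $\Sigma_2(\ell_\infty)$ is one fixed input, and $\Sigma_2(\ell_n)$ is an L-space provided $\Sigma_2(\ell_{n-1})$ is, reducing everything to the base case $n=1$.

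Finally, I would dispose of the two base links by recognizing them as Montesinos links whose branched covers are elliptic (spherical) Seifert fibered spaces, hence L-spaces \cite{OSlens}. The link $\ell_\infty$ is $M(-1/2,-1/2,1/2)$, whose double branched cover is the Seifert fibered space over $S^2$ with three exceptional fibers of orders $(2,2,2)$; the link $\ell_1$ is $M(1/2,1/2,-4/11)$, again covered by an elliptic Seifert fibered space. I expect the main obstacle to be neither the Floer-theoretic induction nor the identification of the base cases, but the faithful execution of the Montesinos trick itself: tracking the involution through the diagram isotopies and confirming that the quotient tangles, the surgery slope, and the determinants are mutually consistent at each stage, so that the skein triple is genuinely a determinant-additive triple rather than a subtractive one.
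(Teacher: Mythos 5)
Your proposal follows the paper's argument essentially verbatim: the same Montesinos-trick identification of the surgered manifold with $\Sigma_2(\ell_n)$, the same determinant-additive skein triple with $\det\ell_n=4n+24$ and $\det\ell_\infty=4$ feeding the L-space triangle criterion, and the same base cases $\ell_\infty=M(-1/2,-1/2,1/2)$ and $\ell_1=M(1/2,1/2,-4/11)$ with elliptic double branched covers. No substantive difference from the paper's proof.
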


\section{Linear independence of $K_n$}\label{sec:linear}
In this section we prove the following result.
\begin{theorem}\label{thm:independence}
  There is an increasing sequence $a_n$ such that the knots $K_{a_1},K_{a_2},\dots$ are linearly independent
  in the topological concordance group.
\end{theorem}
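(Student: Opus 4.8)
\textit{Proof proposal.}

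The plan is to separate the $K_n$ using the averaged Levine--Tristram signature function $\sigma_K\colon S^1\to\Z$, which descends to a homomorphism from the topological concordance group: it is additive under connected sum, sends the reverse mirror to $-\sigma_K$, and vanishes identically when $K$ is topologically slice. Away from the roots of $\Delta_K$ on the unit circle $\sigma_K$ is locally constant, and at a \emph{simple} root it jumps by $\pm 2$. Consequently, if $\sum_i c_i K_{a_i}$ were topologically slice, then $\sum_i c_i\sigma_{K_{a_i}}\equiv 0$, and comparing one--sided limits at any $\omega\in S^1$ gives $\sum_i c_i\,j_{a_i}(\omega)=0$, where $j_{a_i}(\omega)$ is the jump of $\sigma_{K_{a_i}}$ at $\omega$. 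The whole problem thus reduces to producing, for the members of a subsequence, circle roots of their Alexander polynomials that are ``seen'' by exactly one knot.

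First I would extract the algebraic structure hidden in Lemma~\ref{lem:alex}. Writing $Q_n(t)=(t+1)\Delta_{K_n}(t)$ and clearing the denominator there, one rewrites
\[
Q_n(t)=t^{2n+8}R(t)+S(t),\qquad R(t)=t^{9}-t^{7}+t^{5}-t+1,\qquad S(t):=t^{9}R(t^{-1}).
\]
The decisive point is the palindromic symmetry $S(t)=t^9R(t^{-1})$, which forces $|S(t)|=|R(t)|$ for every $t$ with $|t|=1$, since $R(t^{-1})=\overline{R(t)}$ there. Hence on the unit circle a root of $\Delta_{K_n}$ is exactly a solution of $t^{2n+8}=-S(t)/R(t)$, an equation whose right--hand side has modulus $1$ and bounded winding while the left--hand side winds $2n+8$ times; an argument--principle count then yields roughly $2n$ solutions on $S^1$. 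Moreover, differentiating and substituting $t^{2n+8}=-S/R$ gives $Q_n'(\omega)=-(2n+8)S(\omega)/\omega+O(1)$ at such a root, so for all large $n$ every circle root with $R(\omega)\neq 0$ is \emph{simple}.

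The separation statement is elementary: if $\omega\in S^1$ with $R(\omega)\neq 0$ is a common root of $\Delta_{K_n}$ and $\Delta_{K_m}$, then subtracting the two equations gives $(\omega^{2n+8}-\omega^{2m+8})R(\omega)=0$, whence $\omega^{2(n-m)}=1$ and $\omega$ is a root of unity. On the other hand, among the $\sim 2n$ circle roots of $\Delta_{K_n}$ only boundedly many are roots of unity (a $d$--th root of unity can be a root only when it already annihilates a fixed auxiliary polynomial independent of $n$). Therefore for large $n$ there is a simple root $\omega_n$ of $\Delta_{K_n}$ on $S^1$ that is \emph{not} a root of unity, hence is a root of no other $\Delta_{K_m}$. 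Choosing an increasing sequence $a_1<a_2<\dots$ of such indices, the relation $\sum_i c_i\,j_{a_i}(\omega_{a_k})=0$ collapses to $c_k\cdot(\pm 2)=0$, forcing $c_k=0$ for all $k$, which is exactly the asserted linear independence in the topological concordance group.

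The main obstacle is the analytic bookkeeping in the middle step: upgrading ``$\sim 2n$ roots on $S^1$'' to a rigorous count with control of multiplicities and of how many roots are roots of unity. I expect to handle this by a Rouch\'e/argument--principle analysis of $t^{2n+8}R(t)+S(t)$ on $|t|=1$ together with the derivative domination above, both of which are clean precisely because $|R|=|S|$ on the circle. Two further points must be pinned down: that the averaged signature really is a topological concordance homomorphism killing slice knots (classical, and free of the usual prime--power subtleties here since the chosen $\omega_n$ are not roots of unity), and that the $\omega_n$ are genuinely simple so that the jump is $\pm 2$ rather than $0$. An equivalent and perhaps cleaner packaging is via Levine's algebraic concordance group: each non--cyclotomic circle root corresponds to an irreducible symmetric factor of $\Delta_{K_n}$ over $\Q$ with roots on $S^1$, contributing an independent $\Z$--summand, and the separation statement says these factors are pairwise distinct along the subsequence.
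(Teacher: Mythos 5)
Your overall framework---jumps of the Levine--Tristram signature at unit-circle roots of the Alexander polynomial, assembled into homomorphisms on the topological concordance group---is the same as the paper's, and two of your ingredients are correct and nicely put: the identity $(t+1)\Delta_{K_n}(t)=t^{2n+8}R(t)+t^{9}R(t^{-1})$ with $R(t)=t^{9}-t^{7}+t^{5}-t+1$ (this matches an intermediate line in the proof of Lemma~\ref{lem:alex}), and the separation lemma that a common circle root of two members with $R(\omega)\neq 0$ satisfies $\omega^{2(n-m)}=1$. The genuine gap is the step asserting that each large $n$ has a \emph{simple} circle root of $\Delta_{K_n}$ that is \emph{not} a root of unity. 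Your justification---that a $d$-th root of unity can only be a root if it annihilates ``a fixed auxiliary polynomial independent of $n$''---fails: the relevant polynomial is $\zeta R(t)+t^{9}R(t^{-1})$ with $\zeta=\omega^{2n+8}$, which depends on $n\bmod d$, and $d$ is unbounded, so no single auxiliary polynomial controls all cases. Bounding the cyclotomic part of $Q_n$ is a real claim, not bookkeeping: there are L-space knots (torus knots and their iterated cables) all of whose Alexander roots are roots of unity, so nothing structural rules this out for free. The argument-principle count of roughly $2n$ circle roots and the simplicity assertion are also only heuristic as written (the $O(1)$ error and the behaviour near the zero set of $R$ on $S^1$ need to be made uniform).

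The paper's proof shows how to sidestep all three difficulties at once. It symmetrizes $\Delta_{K_n}$ into a real-valued function $\gamma_n(u)=\tfrac12\psi_n(e^{iu})$ and, by an explicit trigonometric estimate, exhibits a sign change of $\gamma_n$ on the shrinking interval $\bigl(0,\tfrac{\pi}{2n-5}\bigr)$ for $n\ge 11$. A sign change produces a root of \emph{odd multiplicity}, which already forces a nonzero signature jump, so simplicity is never needed. Instead of proving that this root is shared with no other member of the family, the indices are chosen greedily: $a_{m+1}$ is taken so large that the first root of $\gamma_{a_{m+1}}$ lies in an interval on which $\gamma_{a_1},\dots,\gamma_{a_m}$ are all strictly positive, so the matrix of jump homomorphisms is triangular and the non-root-of-unity condition becomes irrelevant. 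To salvage your route you would either have to control the cyclotomic factors of $Q_n$ quantitatively, or graft in some version of this triangular selection; the latter is by far the cheaper fix.
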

\begin{proof}
  Recall that for any knot $K$, the Tristram--Levine signature function $\sigma_K$ is a piecewise constant function from $S^1$ to $\Z$
  with discontinuities only at the roots of the Alexander polynomial. %By a slight abuse of notation, we will write $\sigma_K(t)$ for $\sigma_K(e^{it})$, regarding $\sigma_K$ as function on $[0,2\pi]$. 
  We have the following classical result; see \cite[Chapter 12]{Kawauchi} for a classical approach and \cite{Powell_top} for the proof
  under topological concordance.
  \begin{proposition}
    If $K_1$ and $K_2$ are topologically concordant, then $\sigma_{K_1}(t)=\sigma_{K_2}(t)$ for all but finitely many $t$ in $S^1$.
  \end{proposition}
  The next result is a consequence of the definition of the signature, it can be deduced from the d\'evissage of the Blanchfield form. We refer
  an interested reader to \cite[Section 5]{BCP}.
  \begin{lemma}
    Suppose $\Delta_K(t)$ has a zero at $t_0\in S^1$ of odd multiplicity. Then, the signature function has jump at $t_0$, that is,
    \[\left|\lim_{u\to 0^+} \left(\sigma(t_0 e^{i u})-\sigma(t_0 e^{-iu})\right)\right|\ge 2.\]
  \end{lemma}
  Our goal is to find, for all knots $K_n$ with $n\ge 11$, a zero of $\Delta$ of odd multiplicity of $\Delta$ on $S^1$,
  and very close to $1$. First, we symmetrize the Alexander polynomial for $K_n$ computed in Lemma~\ref{lem:alex} above:

\begin{align*}
  \psi_n=&t^{-n-8}\left(( t^{2n+16}-t^{2n+15}) + (t^{2n+12}-t^{2n+11} )+ (t^{2n+10}-t^{2n+9} )\right.\\
 &\quad +(t^{2n+7}-t^{2n+6})+(t^{2n+5}-t^{2n+4})+\dots +(t^{11}-t^{10})+t^9  \\
& \quad  \left.-t^7+t^6-t^5+t^4-t+1\right).
\end{align*}
Decompose $\psi_n=\psi_n^1+\psi_n^2$, where
\begin{multline*}
  \psi^1_n=t^{n+8}+t^{-n-8}-(t^{n+7}+t^{-n-7})+t^{n+4}+t^{-n-4} - (t^{n+3}+t^{-n-3})+\\
  (t^{n+2}+t^{-n-2})-(t^{n+1}+t^{-n-1})+
  (t^{n-1}+t^{-n+1})-(t^{n-2}+t^{-n+2})
\end{multline*}
and
\[
  \psi^2_n=t^{-n-8}(t^{11}-t^{12}+\dots-t^{2n+4}+t^{2n+5}).
\]
It is now convenient to consider both functions on $[0,2\pi]$. To this end, write $t=e^{iu}$, and set
\[\alpha_n(u)=\frac12\psi^1_n(e^{iu}),\ \beta_n(u)=\frac12\psi^2_n(e^{iu}).\]
We first deal with $\alpha_n$. We have
  \begin{multline*}
  \alpha_n(u)=\cos(n+8)u-\cos(n+7)u+\cos(n+4)u-\cos(n+3)u+\cos(n+2)u-\\
\cos(n+1)u+\cos(n-1)u-\cos(n-2)u.
\end{multline*}
From the cosine sum formula we get
\[\alpha_n(u)=-2\sin\frac{u}{2}\left(\sin\frac{2n+15}{2}u+\sin\frac{2n+7}{2}u+\sin\frac{2n+3}{2}u+\sin\frac{2n-3}{2}u\right).\]
Applying the sine sum formula we obtain
\begin{equation}\label{eq:simplify}
\alpha_n(u)=-4\sin\frac{u}{2}\left(\sin(n+\frac{11}{2})u\cos 2u+\sin nu\cos \frac32u\right).
\end{equation}
The sign of the function $\alpha_n$ near $0$ can be examined using \eqref{eq:simplify}. Indeed, the sine function is positive on $(0,\pi)$, hence
$\sin(n+\frac{11}{2})u$ is positive on $(0,\frac{2\pi}{2n+11})$. On that interval, also $\cos 2u$, $\sin nu$ and $\cos\frac32u$ are
positive. That is:
\[\alpha_n(u)<0 \textrm{ for }u\in(0,\frac{2\pi}{2n+11}).\]
As for the expression $\psi^2_n$, we note that
\[\psi^2_n=t^{-n-8}\frac{t^{2n+6}+t^{11}}{t+1}=\frac{t^{n-5/2}+t^{-n+5/2}}{t^{1/2}+t^{-1/2}}.\]
Hence
\[\beta_n(u)=\frac{\cos(n-5/2)u}{2\cos u/2}.\]
Then, $\beta_n(0)=1/2$ and $\beta_n(\frac{\pi}{2n-5})=0$. We have assumed that $n\ge 11$. This implies that 
$\frac{\pi}{2n-5}\in(0,\frac{2\pi}{2n+11})$. 
Define $\gamma_n=\alpha_n+\beta_n$, so that $\gamma_n(u)=\frac12\psi_n(e^{iu})$. We have 
\[\gamma_n(0)=\frac12,\ \gamma_n(\frac{\pi}{2n-5})<0.\]
In other words, $\gamma_n$ changes sign on the interval $(0,\frac{\pi}{2n-5})$. Denote by $u_n$ the smallest positive zero of $\gamma_n$
of odd multiplicity. Note that $e^{iu_n}$ is a root of the Alexander polynomial.
\begin{remark}
  Computer experiments suggest that there is only one zero of $\gamma_n$ in that interval and that this zero is simple. However, we will
  not need this in the proof.
\end{remark}

We can now define an infinite increasing 
sequence $a_m$ such that $K_{a_m}$ are linearly independent. To this end set $a_1=11$. Suppose $a_1,\dots,a_m$
are already defined.
As the function $u\mapsto \gamma_{a_m}(u)$ is continuous, and $\gamma_{a_m}(0)=1$, there is $\lambda_m>0$ such that $\gamma_{a_m}(u)>0$ for all $u\in(0,\lambda_m)$. We choose $\lambda_m$ in such a way that $\lambda_m$ form a decreasing sequence of real positive numbers.

Positivity of $\gamma_{a_m}$ implies in particular that there are no jumps of the signature function of $K_{a_m}$,
that is the signature jumps at all values of $e^{iu}$ for $u\in[0,\lambda_m)$ are zero.

Choose $a_{m+1}$ by the condition that $\frac{\pi}{2a_{m+1}-5}<\lambda_m$. This means that $u_{a_{m+1}}\in (0,\lambda_m)$. 
Then, $\gamma_{a_{m+1}}$ vanishes on $u_{a_{m+1}}$, but for all $j\le m$, $\gamma_{a_m}$ is positive near $u_{a_{m+1}}$.
In particular, the signature jump at $e^{iu_{m+1}}$ for $K_{a_{m+1}}$ is not zero. That is, the map $\Psi_{m+1}$ assigning to a knot half its
signature jump at $e^{ia_{m+1}}$ is a homomorphism from the topological concordance group to $\Z$ that vanishes on the subgroup
spanned by $K_{a_1},\dots,K_{a_m}$, and is not zero on $K_{a_{m+1}}$.

The maps $\Psi_{1},\dots,$ define an isomorphism between the subgroup spanned by $K_{a_1},\dots,$ and $\Z^\infty$.
\end{proof}
We conclude the section by the following statement.
\begin{lemma}
  Let $\Upsilon'$ be the $\Upsilon$ function for the positive trefoil. Then, $\Upsilon_{K_{n+1}}=\Upsilon_{K_n}+\Upsilon'$.
\end{lemma}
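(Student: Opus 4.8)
The plan is to exploit Lemma~\ref{lem:upsilon}, which gives $\Upsilon_{K_n}(t)$ explicitly on $[0,1]$ as a three-piece piecewise-linear function, extended symmetrically to $[1,2]$ by $\Upsilon_{K_n}(t)=\Upsilon_{K_n}(2-t)$. First I would write down the corresponding formula for the positive trefoil $T_{2,3}$. By Proposition~\ref{prop:peterdavid}, its $\Upsilon$ function equals $\Upsilon_2$, the single term for multiplicity $2$; concretely, on $[0,1]$ the positive trefoil has $\Upsilon'(t)=-t$, again extended by $\Upsilon'(t)=\Upsilon'(2-t)$ on $[1,2]$. (This is the standard value; it matches $\Upsilon_2(t)=-t$ for $t\in[0,1]$ from Lemma~\ref{prop:peterdavid}.)

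The core of the argument is then a direct comparison of slopes on each of the three subintervals of $[0,1]$. On $[0,\tfrac12]$ we have $\Upsilon_{K_n}(t)=-(n+8)t$, so subtracting $\Upsilon'(t)=-t$ yields slope $-(n+7)=-((n+1)+8+(-1))$, which matches the $K_{n+1}$ slope $-((n+1)+8)$ shifted by the trefoil's $-t$; more directly, $-(n+8)t+(-t)=-(n+9)t=-((n+1)+8)t$, exactly the $[0,\tfrac12]$ piece of $\Upsilon_{K_{n+1}}$. On $[\tfrac12,\tfrac45]$ the same check gives $-(n+4)t-2+(-t)=-((n+1)+4)t-2$, matching the middle piece of $\Upsilon_{K_{n+1}}$. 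On $[\tfrac45,1]$ we get $-(n-1)t-6+(-t)=-((n+1)-1)t-6$, matching the third piece. Since all three breakpoints $\tfrac12,\tfrac45$ are the same for every $K_n$ (they do not depend on $n$), the piecewise structure is preserved and the identity holds on all of $[0,1]$.

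Finally, both sides satisfy the symmetry $f(t)=f(2-t)$: this holds for $\Upsilon_{K_n}$, $\Upsilon_{K_{n+1}}$, and $\Upsilon'$ individually by Lemma~\ref{lem:upsilon} and the analogous statement for the trefoil, and symmetry is preserved under addition. Hence the identity verified on $[0,1]$ propagates to $[1,2]$, proving $\Upsilon_{K_{n+1}}=\Upsilon_{K_n}+\Upsilon'$ on all of $[0,2]$. The only mild subtlety—not a real obstacle—is confirming that the breakpoint locations coincide across all $n$, which is immediate from the explicit formula; everything else is a routine slope-matching verification on three intervals.
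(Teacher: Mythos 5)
Your proof is correct and is exactly the paper's argument: the paper simply states that the lemma follows immediately from Lemma~\ref{lem:upsilon}, and your slope-matching on the three pieces (with $\Upsilon'(t)=-t$ on $[0,1]$ for the trefoil and the symmetry $f(t)=f(2-t)$) is the intended verification spelled out. The only blemish is the phrase ``subtracting $\Upsilon'$'' in the first comparison, where you mean adding, as your subsequent displayed computation $-(n+8)t+(-t)=-((n+1)+8)t$ makes clear.
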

\begin{proof}
  Follows immediately from Lemma~\ref{lem:upsilon}.
\end{proof}
\begin{corollary}
  The $\Upsilon$ function alone is not sufficient to show that $K_n$ are independent in the concordance group modulo the subgroup
  generated by the algebraic knots.
\end{corollary}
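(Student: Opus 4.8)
The plan is to combine the preceding lemma with the observation that the positive trefoil is itself algebraic. Indeed $T_{2,3}$ is the link of the cuspidal singularity $x^2-y^3=0$, so its $\Upsilon$ function $\Upsilon'$ occurs among the $\Upsilon$ functions of algebraic knots. Writing $\cA\subset\cC$ for the subgroup of the concordance group generated by the algebraic knots, this means $\Upsilon'\in\Upsilon(\cA)$.

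First I would record the properties of $\Upsilon$ that the argument uses: it is a homomorphism from $\cC$ to the additive group $\cF$ of piecewise-linear functions on $[0,2]$, satisfying $\Upsilon_{K\#K'}=\Upsilon_K+\Upsilon_{K'}$ and $\Upsilon_{-K}=-\Upsilon_K$, and it is a concordance invariant. Hence $\Upsilon$ descends to a homomorphism
\[
\overline\Upsilon\colon \cC/\cA\longrightarrow V:=\cF\big/\Upsilon(\cA).
\]
The assertion that ``$\Upsilon$ alone cannot prove independence modulo $\cA$'' is, concretely, the assertion that any independence obstruction coming from $\Upsilon$ must factor through $\overline\Upsilon$; so it suffices to exhibit that $\overline\Upsilon$ does not separate the classes of the $K_n$.

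Next I would invoke the preceding lemma, which by induction on $n$ yields $\Upsilon_{K_n}=\Upsilon_{K_1}+(n-1)\Upsilon'$. Because $\Upsilon'\in\Upsilon(\cA)$, each difference $\Upsilon_{K_n}-\Upsilon_{K_1}=(n-1)\Upsilon'$ vanishes in $V$, so $\overline\Upsilon(K_n)=\overline\Upsilon(K_1)$ for every $n$. More generally, for a finite integer combination one computes
\[
\overline\Upsilon\Big(\sum_n c_n K_n\Big)=\Big(\sum_n c_n\Big)\,\overline\Upsilon(K_1),
\]
so the image under $\overline\Upsilon$ of the subgroup $\langle K_n:n\ge1\rangle$ lies in the cyclic group $\Z\cdot\overline\Upsilon(K_1)$ and has rank at most one.

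A rank-one invariant cannot witness the independence of infinitely many --- indeed, of even two --- classes. For instance $\overline\Upsilon(K_1\#(-K_2))=\overline\Upsilon(K_1)-\overline\Upsilon(K_2)=0$, reflecting that $\Upsilon_{K_1\#(-K_2)}=-\Upsilon'=\Upsilon_{-T_{2,3}}$ with $-T_{2,3}\in\cA$; thus $\Upsilon$ is entirely consistent with $K_1\#(-K_2)$ lying in $\cA$ and gives no obstruction to it. Since the same phenomenon occurs for every combination, $\Upsilon$ cannot establish that the $K_n$ are independent modulo $\cA$. The one point that needs care --- and the only genuine content beyond the preceding lemma --- is the bookkeeping that $\Upsilon$ descends to $\cC/\cA$ and that ``using $\Upsilon$ alone'' is correctly modeled by factoring through $\overline\Upsilon$; once this is granted, the conclusion is immediate.
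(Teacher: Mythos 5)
Your argument is correct and is precisely the reasoning the paper intends: the corollary is stated without proof as an immediate consequence of the preceding lemma, the point being that $\Upsilon_{K_{n+1}}-\Upsilon_{K_n}=\Upsilon'$ is the $\Upsilon$ function of the (algebraic) trefoil, so all the $K_n$ have the same image under the induced map to $\cF/\Upsilon(\cA)$. Your write-up merely makes explicit the bookkeeping (descent to the quotient, rank-one image) that the paper leaves implicit.
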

We remark that the signature jumps at $\zeta=e^{2\pi i/6}$ show that the trefoil does not belong to the subgroup spanned by all the $K_{a_i}$,
not even to the subgroup spanned by all the $K_n$. Note that Lemma~\ref{lem:alex} computes the Alexander polynomial of $K_n$
from the Alexander polynomial $\Delta_L$, which is common for all $n$. It follows that the value of the symmetrized Alexander polynomial of $K_n$,
$\psi_{n}(\zeta)$, depends only on $n\bmod 6$. For $n=1,\dots,6$, we can show that $\psi_n(\zeta)\neq 0$ by direct computations. Hence, none of the $\psi_n$ vanishes on $\zeta$. The signature jump at $\zeta$ is equal to zero for all the $K_n$, but it is not zero for the trefoil.

We do not continue this argument, because these methods alone are insufficient to prove independence of $K_n$ modulo the subgroup generated
by the algebraic knots. In fact, there exist linear combinations of algebraic knots with vanishing signature jumps, see \cite{HKL}.

\section{Specific knots}\label{sec:specific}
In \cite{BakerKegel}, Baker and Kegel shown a list of 632 hyperbolic L-space knots from the SnapPy census. For all of them, we have
computed the Alexander polynomial using SnapPy \cite{CDGsnappy}, and by a simple algorithm we have determined the $\Upsilon$ function.
The expression $-3\int\Upsilon$ turned out to be non-integral for 96 knots, with the denominators in the set
$\{3,5,7,10,11,14,15,21,30,35,70,105,385\}$.% These knots are given in Table~\ref{table:table}. 
%As a specific example of a more complicated knot, we draw the staircase and the graph of the $\Upsilon$ function of the knot $o9\_{27767}$ in Figure~\ref{fig:27767}.

\begin{table}[H]
%\rowcolors{1}{red!3}{yellow!3}
\setlength{\extrarowheight}{1.5pt}
\begin{tabular}{|C|C|c|c|C|C|c|c|}\hline
  $m211$ &$\frac{117}{5}$ &$s560$ &$\frac{192}{5}$ &$v0319$ &$\frac{292}{5}$ &$v0545$ &$\frac{173}{5}$ \\[1mm]\hline 
$v0830$ &$\frac{237}{5}$ &$v1359$ &$\frac{1874}{35}$ &$v1423$ &$\frac{326}{7}$ &$v1565$ &$\frac{267}{5}$ \\[1mm]\hline 
$v2900$ &$\frac{389}{7}$ &$v3070$ &$\frac{445}{7}$ &$v3335$ &$\frac{188}{5}$ &$t00621$ &$\frac{467}{5}$ \\[1mm]\hline 
$t01966$ &$\frac{357}{5}$ &$t03106$ &$\frac{999}{14}$ &$t03710$ &$\frac{342}{5}$ &$t03843$ &$\frac{293}{5}$ \\[1mm]\hline 
$t04927$ &$\frac{571}{7}$ &$t06246$ &$\frac{3554}{35}$ &$t06637$ &$\frac{5987}{70}$ &$t06957$ &$\frac{2068}{21}$ \\[1mm]\hline 
$t08114$ &$\frac{148}{5}$ &$t08184$ &$\frac{3099}{35}$ &$t08936$ &$\frac{1979}{35}$ &$t09284$ &$\frac{132}{5}$ \\[1mm]\hline 
$t09633$ &$\frac{1566}{35}$ &$t09882$ &$\frac{108}{5}$ &$t10177$ &$\frac{690}{7}$ &$t11887$ &$\frac{308}{5}$ \\[1mm]\hline 
$t12288$ &$\frac{634}{7}$ &$t12533$ &$\frac{157}{5}$ &$o9\_01175$ &$\frac{642}{5}$ &$o9\_02383$ &$\frac{413}{5}$ \\[1mm]\hline 
$o9\_02909$ &$\frac{334}{7}$ &$o9\_04054$ &$\frac{348}{5}$ &$o9\_04060$ &$\frac{477}{5}$ &$o9\_07044$ &$\frac{4674}{35}$ \\[1mm]\hline 
$o9\_07152$ &$\frac{687}{5}$ &$o9\_07401$ &$\frac{767}{7}$ &$o9\_08402$ &$\frac{417}{5}$ &$o9\_09271$ &$\frac{233}{3}$ \\[1mm]\hline 
$o9\_09731$ &$\frac{867}{14}$ &$o9\_10192$ &$\frac{12346}{105}$ &$o9\_10213$ &$\frac{3727}{42}$ &$o9\_11556$ &$\frac{592}{5}$ \\[1mm]\hline 
$o9\_11658$ &$\frac{816}{7}$ &$o9\_12079$ &$\frac{787}{5}$ &$o9\_12253$ &$\frac{413}{5}$ &$o9\_12477$ &$\frac{1881}{14}$ \\[1mm]\hline 
$o9\_13054$ &$\frac{662}{7}$ &$o9\_16431$ &$\frac{5234}{35}$ &$o9\_17382$ &$\frac{637}{5}$ &$o9\_19247$ &$\frac{1067}{10}$ \\[1mm]\hline 
$o9\_19645$ &$\frac{517}{5}$ &$o9\_20029$ &$\frac{1082}{7}$ &$o9\_21620$ &$\frac{1671}{14}$ &$o9\_22252$ &$\frac{1138}{7}$ \\[1mm]\hline 
$o9\_23032$ &$\frac{1769}{35}$ &$o9\_23461$ &$\frac{6837}{70}$ &$o9\_23723$ &$\frac{4324}{35}$ &$o9\_24069$ &$\frac{9347}{70}$ \\[1mm]\hline 
$o9\_24126$ &$\frac{59548}{385}$ &$o9\_24407$ &$\frac{3293}{30}$ &$o9\_24946$ &$\frac{268}{5}$ &$o9\_25110$ &$\frac{2005}{21}$ \\[1mm]\hline 
$o9\_27371$ &$\frac{935}{7}$ &$o9\_27767$ &$\frac{15704}{105}$ &$o9\_28751$ &$\frac{3557}{30}$ &$o9\_29551$ &$\frac{1691}{15}$ \\[1mm]\hline 
$o9\_29648$ &$\frac{725}{7}$ &$o9\_30142$ &$\frac{1570}{11}$ &$o9\_31440$ &$\frac{503}{5}$ &$o9\_32065$ &$\frac{7047}{70}$ \\[1mm]\hline 
$o9\_32314$ &$\frac{1041}{14}$ &$o9\_33380$ &$\frac{252}{5}$ &$o9\_33430$ &$\frac{312}{5}$ &$o9\_33486$ &$\frac{3391}{21}$ \\[1mm]\hline 
$o9\_33801$ &$\frac{3204}{35}$ &$o9\_33959$ &$\frac{6477}{70}$ &$o9\_34689$ &$\frac{428}{5}$ &$o9\_35720$ &$\frac{363}{5}$ \\[1mm]\hline 
$o9\_36380$ &$\frac{1482}{11}$ &$o9\_36544$ &$\frac{2579}{35}$ &$o9\_37482$ &$\frac{849}{10}$ &$o9\_37551$ &$\frac{228}{5}$ \\[1mm]\hline 
$o9\_38287$ &$\frac{963}{14}$ &$o9\_38679$ &$\frac{879}{7}$ &$o9\_39162$ &$\frac{821}{14}$ &$o9\_39859$ &$\frac{2159}{35}$ \\[1mm]\hline 
$o9\_40026$ &$\frac{1656}{35}$ &$o9\_40363$ &$\frac{5427}{70}$ &$o9\_40487$ &$\frac{173}{5}$ &$o9\_42493$ &$\frac{2684}{35}$ \\[1mm]\hline 
$o9\_42675$ &$\frac{203}{5}$ &$o9\_42961$ &$\frac{355}{7}$ &$o9\_43750$ &$\frac{357}{5}$ &$o9\_43857$ &$\frac{2269}{35}$ \\[1mm]\hline 
\end{tabular}
%\caption{The list of L-space knots of \cite{BakerKegel} for which $-3\int\Upsilon$ is not an integer.}\label{table:table}
\end{table}
\bibliographystyle{abbrv}
\def\MR#1{}
\bibliography{biblio.bib}

\end{document}